\date{\today}
\newtheorem{theorem}{Theorem}
\newtheorem{question}[theorem]{Question}
\newtheorem{corollary}[theorem]{Corollary}
\newtheorem{lemma}[theorem]{Lemma}
\theoremstyle{definition}
\newtheorem{remark}[theorem]{Remark}
\newcommand{\N}{\mathbb N}
\def\N{\mathbb N}
\def\t_c{\tau_{comp}}
\def\op{\operatorname}
\begin{document}

\title[Embedding of graph inverse semigroups into CLP-compact topological semigroups]{Embedding of graph inverse semigroups into CLP-compact topological semigroups}

\author[S.~Bardyla]{Serhii~Bardyla}
\thanks{The work of the author is supported by the Austrian Science Fund FWF (Grant  I
3709 N35).}
\address{S. Bardyla: Institute of Mathematics, Kurt G\"{o}del Research Center, University of Vienna, Austria}
\email{sbardyla@yahoo.com}

\keywords{CLP-compact space, countably compact space, topological semigroup, polycyclic monoid, graph inverse semigroup}

\subjclass[2010]{Primary 20M18, 22A15. Secondary 54D30}

\begin{abstract}
In this paper we investigate graph inverse semigroups which are subsemigroups of compact-like topological semigroups. More precisely, we characterise graph inverse semigroups which admit a compact semigroup topology and describe graph inverse semigroups which can be embeded densely into CLP-compact topological semigroups.
\end{abstract}
\maketitle

\section{Preliminaries}
In this paper all topological spaces are assumed to be Hausdorff. We shall follow the terminology of~\cite{Clifford-Preston-1961-1967,
Engelking-1989, Lawson-1998}. By $\omega$ we denote the first infinite ordinal. Put $\N=\omega\setminus\{0\}$. The cardinality of a set $X$ is denoted by $|X|$.

A semigroup $S$ is called an \emph{inverse semigroup} if for each element $a\in S$ there exists a unique
inverse element $a^{-1}\in S$ such that $aa^{-1}a=a$ and $a^{-1}aa^{-1}=a^{-1}$.
The map which associates every element of an inverse semigroup to its
inverse is called an \emph{inversion}.

For a subset $A$ of a topological space $X$ by
$\overline{A}^X$ (or simply $\overline{A}$)
we denote the closure of the set $A$ in $X$.
A topological space $X$ is said to be
\begin{itemize}
\item[$\bullet$] {\em compact}, if each open cover of $X$ contains a finite subcover;
\item[$\bullet$] {\em countably compact}, if each infinite subset $A\subset X$ has an accumulation point;
\item[$\bullet$] {\em countably compact at a subset} $A\subset X$, if each infinite subset $B\subset A$ has an accumulation point in $X$;
\item[$\bullet$] {\em countably pracompact}, if there exists a dense subset $A$ of $X$ such that $X$ is countably compact at $A$;
\item[$\bullet$] {\em pseudocompact}, if $X$ is Tychonoff and each continuous real-valued function on $X$ is bounded;
\item[$\bullet$] {\em feebly compact}, if each locally finite family of non-empty open subsets of $X$
  is finite;
\item[$\bullet$] {\em CLP-compact}, if every cover of $X$ consisting of clopen sets has a finite subcover.
\end{itemize}

By~\cite{G-R-2018}, for a topological space $X$ the following implications hold: $X$ is compact $\Rightarrow$ $X$ is countably compact $\Rightarrow$ $X$ is countably pracompact $\Rightarrow$ $X$ is feebly compact. Also, a feebly compact topological space $X$ is pseudocompact iff $X$ is Tychonoff. It is easy to check that each feebly compact space is CLP-compact. CLP-compact spaces were investigated in~\cite{Dik} and~\cite{Med}.

A {\em topological (inverse) semigroup} is a topological space together with a continuous semigroup operation (and an~inversion, respectively).  If $S$ is a semigroup (an inverse semigroup) and $\tau$ is a topology on $S$ such that $(S,\tau)$ is a topological (inverse) semigroup, then we
shall call $\tau$ a (\emph{inverse}) \emph{semigroup} \emph{topology} on $S$.
A {\em semitopological semigroup} is a topological space together with a separately continuous semigroup operation.

Let $X$ be a non-empty set. By $\mathcal{M}_{X}$ we denote the set
 $
 X{\times}X\cup\{0\}
 $
where $0\notin X{\times}X$ endowed with the following semigroup
operation:
\begin{equation*}
\begin{split}
&(a,b)\cdot(c,d)=
\left\{
  \begin{array}{cl}
    (a,d), & \hbox{ if~ } b=c;\\
    0, & \hbox{ if~ } b\neq c,
  \end{array}
\right.\\
&\hbox{and } (a,b)\cdot 0=0\cdot(a,b)=0\cdot 0=0, \hbox{ for each } a,b,c,d\in X.
\end{split}
\end{equation*}
The semigroup $\mathcal{M}_{X}$ is called the \emph{semigroup of $X{\times}X$-matrix units}. Observe that semigroups $\mathcal{M}_{X}$ and $\mathcal{M}_{Y}$ are isomorphic iff $|X|=|Y|$.

If a set $X$ is infinite, then the semigroup of $X{\times}X$-matrix units cannot be embedded into a compact topological semigroup (see~\cite[Theorem 3]{Gutik-2005}). In~\cite[Theorem~5]{Gutik-2009} the above result was extended over the class of countably compact topological semigroups.
In~\cite[Theorem~4.4]{BardGut-2016(1)} it was showed that for an infinite set $X$ the semigroup $\mathcal{M}_{X}$ cannot be embedded as a dense subsemigroup into a feebly compact topological semigroup.

A {\em bicyclic monoid} $\mathcal{C}(p,q)$ is the semigroup with the identity $1$ generated by two elements $p$ and $q$ subject to the condition $pq=1$.
Topologization of the bicyclic semigroup was investigated in~\cite{Eberhart-Selden-1969} and~\cite{Gutik-2015}. Neither stable nor $\Gamma$-compact topological semigroups can
contain a copy of the bicyclic semigroup (see \cite{Anderson-Hunter-Koch-1965,Hildebrant-Koch-1988}). In~\cite{GutRep-2007} it was proved that the bicyclic monoid does not embed into a countably compact topological inverse semigroup. A topological semigroup which has a pseudocompact square cannot contain the bicyclic monoid~\cite{BanDimGut-2010}. However, in~\cite[Theorem 6.6]{BanDimGut-2010} it was proved that under Martin's Axiom there exists a Tychonoff countably compact topological semigroup $S$ containing the bicyclic monoid.  The existence of a ZFC-example of a countably compact topological semigroup which contains the bicyclic monoid is still an open problem (see~\cite[Problem 7.1]{BanDimGut-2010}).

One of the generalizations of the bicyclic monoid is a polycyclic monoid.
For a cardinal $\lambda$ a {\em polycyclic monoid} $\mathcal{P}_\lambda$ is the semigroup with identity $1$ and zero $0$ given by the presentation:
\begin{equation*}
    \mathcal{P}_\lambda=\left\langle 0,1, \left\{p_i\right\}_{i\in\lambda}, \left\{p_i^{-1}\right\}_{i\in\lambda}\mid  p_i^{-1}p_i=1, p_j^{-1}p_i=0 \hbox{~for~} i\neq j\right\rangle.
\end{equation*}

Polycyclic monoid $\mathcal{P}_{k}$ over a finite cardinal $k\geq 2$ was introduced in \cite{Nivat-Perrot-1970}. Observe that the bicyclic semigroup with adjoined zero is isomorphic to the polycyclic monoid $\mathcal{P}_{1}$.
Embedding of the polycyclic monoid into compact-like topological semigroups were investigated in~\cite{BardGut-2016(2)}. More precisely, it was proved that for each cardinal $\lambda>1$ polycyclic monoid $\mathcal{P}_{\lambda}$ does not embed as a dense subsemigroup into a feebly compact topological semigroup.

A {\em directed graph} $E=(E^{0},E^{1},r,s)$ consists of disjoint sets $E^{0},E^{1}$ of {\em vertices} and {\em edges}, respectively, together with functions $s,r:E^{1}\rightarrow E^{0}$ which are called {\em source} and {\em range}, respectively. In this paper we refer to directed graph simply as ``graph". We consider each vertex being a path of length zero. A path of a non-zero length $x=e_{1}\ldots e_{n}$ in a graph $E$ is a finite sequence of edges $e_{1},\ldots,e_{n}$ such that $r(e_{i})=s(e_{i+1})$ for each positive integer $i<n$. By $\operatorname{Path}^+(E)$ we denote the set of all paths of a graph $E$ which have a non-zero length. We extend functions $s$ and $r$ on the set $\operatorname{Path}(E)=E^0\cup \operatorname{Path}^+(E)$ of all paths in graph $E$ as follows: for each vertex $e\in E^0$ put $s(e)=r(e)=e$ and for each path of a non-zero length $x=e_{1}\ldots e_{n}$ put $s(x)=s(e_{1})$ and $r(x)=r(e_{n})$. By $|x|$ we denote the length of a path $x$. Let $a=e_1\ldots e_n$ and $b=f_1\ldots f_m$ be two paths such that $|a|>0$, $|b|>0$ and $r(a)=s(b)$. Then by $ab$ we denote the path $e_1\ldots e_nf_1\ldots f_m$.
If $a$ is a vertex and $b$ is a path such that $s(b)=a$ ($r(b)=a$, resp.) then put $ab=b$ ($ba=b$, resp.).
An edge $e$ is called a {\em loop} if $s(e)=r(e)$. A path $x$ is called a {\em cycle} if $s(x)=r(x)$ and $|x|>0$. A graph $E$ is called {\em acyclic} if it contains no cycles.

For a given directed graph $E=(E^{0},E^{1},r,s)$ a graph inverse semigroup (or simply GIS) $G(E)$ over a graph $E$ is a semigroup with zero generated by the sets $E^{0}$, $E^{1}$ together with a set $E^{-1}=\{e^{-1}|\hbox{ } e\in E^{1}\}$ which is disjoint with $E^0\cup E^1$ satisfying the following relations for all $a,b\in E^{0}$ and $e,f\in E^{1}$:
 \begin{itemize}
 \item [(i)]  $a\cdot b=a$ if $a=b$ and $a\cdot b=0$ if $a\neq b$;
 \item [(ii)] $s(e)\cdot e=e\cdot r(e)=e;$
 \item [(iii)] $e^{-1}\cdot s(e)=r(e)\cdot e^{-1}=e^{-1};$
 \item [(iv)] $e^{-1}\cdot f=r(e)$ if $e=f$ and $e^{-1}\cdot f=0$ if $e\neq f$.
\end{itemize}

Graph inverse semigroups are generalizations of the polycyclic monoids. In particular, for every cardinal $\lambda$ polycyclic monoid $\mathcal{P}_{\lambda}$ is isomorphic to the graph inverse semigroup over the graph which consists of one vertex and $\lambda$ distinct loops. However, by~\cite[Theorem~1]{Bardyla-2017(2)}, each graph inverse semigroup $G(E)$ is isomorphic to a subsemigroup of the polycyclic monoid $\mathcal{P}_{|G(E)|}$.

According to~\cite[Chapter~3.1]{Jones-2011}, each non-zero element of a graph inverse semigroup $G(E)$ can be uniquely represented as $uv^{-1}$ where $u,v\in \operatorname{Path}(E)$ and $r(u)=r(v)$. A semigroup operation in $G(E)$ is defined by the following way:
\begin{equation*}
\begin{split}
  &  u_1v_1^{-1}\cdot u_2v_2^{-1}=
    \left\{
      \begin{array}{ccl}
        u_1wv_2^{-1}, & \hbox{if~~} u_2=v_1w & \hbox{for some~} w\in \operatorname{Path}(E);\\
        u_1(v_2w)^{-1},   & \hbox{if~~} v_1=u_2w & \hbox{for some~} w\in \operatorname{Path}(E);\\
        0,              & \hbox{otherwise},
      \end{array}
    \right.\\
  &  \hbox{and } uv^{-1}\cdot 0=0\cdot uv^{-1}=0\cdot 0=0.
    \end{split}
\end{equation*}

Further, when we write an element of $G(E)$ in a form $uv^{-1}$ we always mean that $u,v\in\operatorname{Path}(E)$ and $r(u)=r(v)$.
Simple verifications show that $G(E)$ is an inverse semigroup and $(uv^{-1})^{-1}=vu^{-1}$.

Graph inverse semigroups play an important role in the study of rings and $C^{*}$-algebras (see \cite{Abrams-2005,Ara-2007,Cuntz-1980,Kumjian-1998,Paterson-1999}).
Algebraic theory of graph inverse semigroups is well developed (see~\cite{Amal-2016, Bardyla-2018(1), Bardyla-2017(2), Jones-2011, Jones-Lawson-2014, Lawson-2009, Mesyan-2016}).
Topological properties of graph inverse semigroups were investigated
in~\cite{Bardyla-2016(1),Bardyla-2017(1),Bardyla-2018, BardGut-2016(1),Mesyan-Mitchell-Morayne-Peresse-2013}.

In this paper we investigate graph inverse semigroups which are subsemigroups of compact-like topological semigroups. More precisely, we characterise graph inverse semigroups which admit a compact semigroup topology and describe graph inverse semigroups which can be embedded densely into CLP-compact topological semigroups.

\section{Graph inverse semigroups which admit a compact semigroup topology}
For any sets $A,B$ define $A\subset^* B$ if $A\setminus B$ is finite. The following technical lemma will be very useful in this paper.

\begin{lemma}\label{lemma0}
Let $X$ be a CLP-compact space with a dense discrete subspace $Y$. Then for each subset $Z\subset^* Y$ the set $\overline{Z}$ is countably compact at $Z$. Moreover, if $\overline{Z}\setminus Z$ is compact then $\overline{Z}$ is compact.
\end{lemma}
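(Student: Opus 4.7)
The plan is to first make the key observation that, because $Y$ is dense and discrete in $X$, every point of $Y$ is actually isolated in $X$. Indeed, for $y \in Y$ there is an open $U \ni y$ with $U \cap Y = \{y\}$; then $U \setminus \{y\}$ is open in $X$ and disjoint from the dense set $Y$, so $U \setminus \{y\} = \emptyset$, i.e., $\{y\} = U$ is open. Consequently every subset of $Y$ is open in $X$. I expect most of the work to come from this one observation.

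For the first assertion, I would take an infinite $B \subset Z$ and, after discarding the finite set $Z \setminus Y$ (which does not affect accumulation points), reduce to the case $B \subset Y$. Suppose for contradiction that $B$ has no accumulation point in $X$. Then every subset of $B$ is closed in $X$ (no limit points) and open in $X$ (as a subset of $Y$), hence clopen. Partitioning $B$ into countably many non-empty pieces $B = \bigsqcup_{n \in \omega} B_n$, the family $\{X \setminus B\} \cup \{B_n : n \in \omega\}$ is a clopen cover of $X$ with no finite subcover, contradicting the CLP-compactness of $X$. The accumulation point thus produced lies in the closed set $\overline{Z}$, as required.

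For the \emph{moreover} part, given an open cover $\mathcal{U}$ of $\overline{Z}$, I would use compactness of $\overline{Z} \setminus Z$ and finiteness of $Z \setminus Y$ to extract a finite subfamily of $\mathcal{U}$ whose union $V$ contains both of these sets. Then $\overline{Z} \setminus V$ is closed in $\overline{Z}$ and contained in $Z \cap Y \subset Y$. If it were infinite, the first assertion would furnish an accumulation point $p \in \overline{Z} \setminus V \subset Y$; but points of $Y$ are isolated in $X$, so they cannot be accumulation points of anything. Hence $\overline{Z} \setminus V$ is finite, and covering these remaining points by finitely many more members of $\mathcal{U}$ yields the desired finite subcover of $\overline{Z}$.

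The main obstacle is psychological rather than technical: noticing that under the hypotheses every point of $Y$ is isolated in $X$. After that, the infinite clopen partition trick immediately violates CLP-compactness, and the compactness half reduces cleanly to the countable compactness half via the finite bookkeeping for $\overline{Z} \setminus Z$ and $Z \setminus Y$.
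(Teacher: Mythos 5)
Your proof is correct and follows essentially the same strategy as the paper's: for the first assertion, a closed infinite subset of $Z$ with no accumulation point yields a clopen cover of $X$ with no finite subcover (the paper uses singletons plus $X\setminus A$ where you use a countable clopen partition of $B$, but the idea is identical), and for the ``moreover'' part both arguments cover the compact piece $\overline{Z}\setminus Z$ together with the finitely many points of $Z\setminus Y$ by finitely many members of the cover and then use the first assertion to show the leftover set of isolated points is finite. Your explicit remark that density plus discreteness forces every point of $Y$ to be isolated is a point the paper uses only implicitly, and it makes the clopenness claims cleaner.
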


\begin{proof}
Let $Y$ be a dense discrete subset of a CLP-compact space $X$ and $Z\subset^* Y$. To obtain a contradiction assume that there exists an infinite discrete subset $A\subset Z$ which is closed in $\overline{Z}$ (and hence in $X$). Since $Z\subset^* Y$ the set $A\cap Y$ is infinite. The discreteness of $A$ implies that the set $A\cap Y$ is closed in $X$. At this point it easy to see that the cover $\mathcal{U}=\{\{a\}\mid a\in A\}\cup \{X\setminus A\}$ of $X$ consists of clopen subsets and admits no finite subcover which contradicts CLP-compactness of $X$. Hence each infinite subset of $Z$ has an accumulation point in $\overline{Z}$ which implies that $\overline{Z}$ is countably compact at $Z$.

Assume that the set $\overline{Z}\setminus Z$ is compact. Since $Z\subset^*Y$, $Z= Z\cap Y\cup\{z_1,\ldots, z_k\}$. By $Z_d$ we denote the set of all isolated points of $Z$. Since $\overline{Z}$ is countably compact at the set $Z$ we obtain that for each infinite subset $A$ of $Z$ there exists an accumulation point $a\in \overline{Z}\setminus Z_d$.
Let $\mathcal{F}$ be an open cover of $\overline{Z}$. Since the set $\overline{Z}\setminus Z_d=\overline{Z}\setminus Z\cup\{z_1,\ldots,z_k\}$ is compact there exists a finite subset $\{F_1,\ldots, F_n\}\subset \mathcal{F}$ such that $\overline{Z}\setminus Z_d\subset \cup_{i=1}^n F_i$. We claim that the set $A=\overline{Z}\setminus (\cup_{i=1}^nF_i)$ is finite. Indeed, if the set $A$ is infinite, then it has an accumulation point $a\in \overline{Z}\setminus Z_d$. However, the set $\cup_{i=1}^nF_i$ is an open neighborhood of $a$ which does not intersect the set $A$. The obtained contradiction implies that the set $A$ is finite. Put $A=\{y_1,\ldots,y_m\}$. For each $i\leq m$ there exists an element $F_{y_i}\in \mathcal{F}$ such that $y_i\in F_{y_i}$. Then $\mathcal{G}=\{F_1,\ldots,F_n,F_{y_1},\ldots,F_{y_m}\}$ is a finite subcover of $\mathcal{F}$. Hence the space $\overline{Z}$ is compact.
\end{proof}



Let $G(E)$ be an arbitrary semitopological GIS. Observe that each non-zero element of $G(E)$ is isolated (see~\cite[Theorem~4]{Bardyla-2018}). For an arbitrary GIS $G(E)$ by $\tau_{c}$ we denote the topology on $G(E)$ which is defined as follows: each non-zero element is isolated in $(G(E),\tau_{c})$ and open neighborhood base of the point $0$ consists of cofinite subsets of $G(E)$ which contain $0$. According to~\cite[Lemma 3]{Bardyla-2018}, $\tau_{c}$ is the unique topology which makes $G(E)$ a compact semitopological semigroup.
Lemma~\ref{lemma0} implies the following two corollaries:

\begin{corollary}\label{lemma1}
For an arbitrary semitopological GIS $G(E)$ the following conditions are equivalent:
\begin{itemize}
\item[(1)] $G(E)$ is compact;
\item[(2)] $G(E)$ is countably compact;
\item[(3)] $G(E)$ is feebly compact;
\item[(4)] $G(E)$ is CLP-compact;
\item[(5)] $G(E)$ is topologically isomorphic to the $(G(E),\tau_{c})$.
\end{itemize}
\end{corollary}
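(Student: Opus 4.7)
The plan is to prove the cycle $(5) \Rightarrow (1) \Rightarrow (2) \Rightarrow (3) \Rightarrow (4) \Rightarrow (5)$. The implications $(1)\Rightarrow(2)\Rightarrow(3)\Rightarrow(4)$ are immediate from the chain of implications between compactness properties recalled in the preliminaries (compact $\Rightarrow$ countably compact $\Rightarrow$ feebly compact $\Rightarrow$ CLP-compact). The implication $(5)\Rightarrow(1)$ is also immediate from the definition of $\tau_c$: any open cover of $(G(E),\tau_c)$ must contain some neighbourhood of $0$, which is cofinite, and the remaining finitely many points are absorbed by finitely many further members of the cover. Thus the essential content is $(4)\Rightarrow(5)$.

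For $(4)\Rightarrow(5)$, I would begin by invoking the cited observation that in every semitopological GIS each non-zero element is isolated, so $Y:=G(E)\setminus\{0\}$ is a discrete subspace. A short preliminary step handles the case in which $Y$ is not dense: then $\{0\}$ is open, the space is discrete, and a CLP-compact discrete space must be finite; on a finite set $\tau_c$ is just the (unique Hausdorff) discrete topology, so there is nothing more to check. Hence we may assume $Y$ is dense in $G(E)$.

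Since the non-zero points are already isolated, in order to conclude that the topology equals $\tau_c$ it suffices to show that every open neighbourhood $U$ of $0$ is cofinite. Fix such a $U$ and put $Z:=G(E)\setminus U$. Because $0\in U$ we have $Z\subset Y$, so in particular $Z\subset^{*}Y$, and $Z$ is closed, whence $\overline{Z}=Z$. Applying Lemma~\ref{lemma0} to the CLP-compact space $X=G(E)$ with the dense discrete subspace $Y$ gives that $\overline{Z}=Z$ is countably compact at $Z$. But $Z\subset Y$ is itself discrete, so no infinite subset of $Z$ can have an accumulation point in $Z$. Therefore $Z$ is finite, $U$ is cofinite, and the topology on $G(E)$ coincides with $\tau_c$.

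The main conceptual step, and the only nontrivial one, is recognising that fixing the neighbourhood filter at $0$ reduces to the single requirement that complements of open neighbourhoods of $0$ be finite, and then matching this exactly to the first conclusion of Lemma~\ref{lemma0}: such a complement is closed, sits in $Y$, and hence is a discrete closed subset of $\overline{Z}$, which under countable compactness at $Z$ forces finiteness. The ``moreover'' clause of Lemma~\ref{lemma0} plays no role here; its use is reserved for later results. The only minor care point is the separate handling of the degenerate discrete case, used solely to justify density of $Y$.
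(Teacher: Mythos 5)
Your proposal is correct and follows the paper's intended route: the paper gives no written proof, asserting only that Corollary~\ref{lemma1} follows from Lemma~\ref{lemma0}, and your argument is precisely the natural way to fill that in (the complement of a neighbourhood of $0$ is a closed subset of the dense discrete set of non-zero points, so countable compactness at it forces it to be finite). The handling of the degenerate case where $\{0\}$ is isolated and the direct verification of $(5)\Rightarrow(1)$ are both fine.
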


\begin{corollary}\label{cor1}
Let $S$ be a CLP-compact semitopological semigroup which contains densely a graph inverse semigroup $G(E)$. Then for each
subset $X\subset G(E)$ the set $\overline{X}^S$ is countably compact at $X$.
\end{corollary}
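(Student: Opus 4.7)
The plan is to reduce Corollary~\ref{cor1} to a direct application of Lemma~\ref{lemma0}, with $Y:=G(E)\setminus\{0\}$ playing the role of the dense discrete subspace. Recall that by~\cite[Theorem~4]{Bardyla-2018}, cited just before Corollary~\ref{lemma1}, every non-zero element of a semitopological GIS is isolated, so $Y$ is a discrete subspace of $G(E)$, and hence of $S$.

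The first task is to identify a clopen (and therefore CLP-compact) subspace $T\subseteq S$ in which $Y$ is dense. The argument splits on whether $0\in\overline{Y}^S$. If so, then $S=\overline{G(E)}^S=\overline{Y\cup\{0\}}^S=\overline{Y}^S\cup\{0\}=\overline{Y}^S$, and $T:=S$ works. Otherwise, pick an open neighborhood $V\ni 0$ in $S$ disjoint from $Y$; the density of $G(E)=Y\cup\{0\}$ forces $V\cap G(E)=\{0\}$, and the Hausdorff property then forces every $s\in V$ to satisfy $s=0$, so $\{0\}$ is clopen in $S$. Taking $T:=S\setminus\{0\}$ gives a clopen subspace of $S$, which inherits CLP-compactness (any clopen cover of $T$ together with $\{0\}$ is a clopen cover of $S$), and in which $Y$ is dense.

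Once $T$ is in place, the conclusion is immediate. Fix $X\subseteq G(E)$ and an infinite $B\subseteq X$, and set $B_0:=B\setminus\{0\}$; this is an infinite subset of $Y$, so in particular $B_0\subset^* Y$. Lemma~\ref{lemma0} produces an accumulation point $p\in\overline{B_0}^T$ of $B_0$. Since $T$ is closed in $S$, one has $\overline{B_0}^T=\overline{B_0}^S\subseteq\overline{X}^S$, and any accumulation point of $B_0$ is, a fortiori, an accumulation point of $B$. The only slightly subtle step is the case split above: a naive attempt to apply Lemma~\ref{lemma0} with the ambient space equal to $S$ fails precisely when $0$ happens to be isolated in $S$, and replacing $S$ by the clopen subspace $S\setminus\{0\}$ is essentially the only way to handle that edge case.
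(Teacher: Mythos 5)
Your argument is correct and is essentially the one the paper intends: the paper offers no separate proof, stating only that the corollary follows from Lemma~\ref{lemma0}, and your write-up supplies exactly the missing details (taking $Y=G(E)\setminus\{0\}$ as the dense discrete subspace, using \cite[Theorem~4]{Bardyla-2018}, and handling the degenerate case where $0$ is isolated by passing to the clopen CLP-compact subspace $S\setminus\{0\}$, or equivalently by taking $Y=G(E)$ there). The only cosmetic remark is that one can apply Lemma~\ref{lemma0} directly with $Z=X$ (since $X\subset^* Y$) rather than with $Z=B_0$ for each infinite $B\subseteq X$, but this changes nothing of substance.
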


By~\cite[Corollary~2]{Mesyan-2016}, two non-zero elements $ab^{-1}$ and $cd^{-1}$ of a GIS $G(E)$ are $\mathcal{D}$-equivalent iff $r(a)=r(b)=r(c)=r(d)$. For more about Green's relations on graph inverse semigroups see~\cite{Mesyan-2016}. Observe that each non-zero $\mathcal{D}$-class contains exactly one vertex of a graph $E$. By $D_e$ we denote the $\mathcal{D}$-class which contains vertex $e\in E^0$.
The following theorem characterises graph inverse semigroups which admit a compact (inverse) semigroup topology.

\begin{theorem}\label{th1}
Let $G(E)$ be a semitopological GIS. Then the following statements are equivalent:
\begin{itemize}
\item[$(1)$] the semigroup operation is jointly continuous in $(G(E),\tau_{c})$;
\item[$(2)$] for each element $uv^{-1}\in G(E)\setminus\{0\}$ the set $M_{uv^{-1}}=\{(ab^{-1},cd^{-1})\in G(E){\times}G(E)\mid ab^{-1}\cdot cd^{-1}=uv^{-1}\}$ is finite;
\item[$(3)$] for each vertex $e$ the set $I_e=\{u\in\op{Path}(E)\mid r(u)=e\}$ is finite;
\item[$(4)$] $G(E)$ neither contains the bicyclic monoid nor the semigroup of $\omega{\times}\omega$-matrix units;
\item[$(5)$] each $\mathcal{D}$-class in $G(E)$ is finite.
\end{itemize}
\end{theorem}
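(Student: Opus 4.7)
The plan is to prove the equivalences in the order $(1)\Leftrightarrow(2)\Leftrightarrow(3)\Leftrightarrow(5)$ together with $(3)\Leftrightarrow(4)$, using the natural tool for each step. The guiding observation is that in $(G(E),\tau_c)$ every non-zero point is isolated and the basic open neighborhoods of $0$ are precisely the cofinite sets containing $0$; consequently, in the product space $(G(E),\tau_c)\times(G(E),\tau_c)$ the only points at which joint continuity of the multiplication can fail are those with at least one coordinate equal to $0$.

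For $(1)\Leftrightarrow(2)$ I would first establish an auxiliary lemma valid in any GIS: for each non-zero $b\in G(E)$ and each non-zero $f\in G(E)$, the set $\{x\in G(E):xb=f\}$ is finite, and symmetrically for left multiplication. This is a direct case analysis of the product formula: writing $x=cd^{-1}$, $b=uv^{-1}$, $f=rs^{-1}$, each non-zero branch forces $c$ to be a prefix of $r$ (finitely many options) and then determines $d$. With this lemma, joint continuity at $(a,0)$ and $(0,b)$ is automatic. Joint continuity at $(0,0)$ for a basic neighborhood $G(E)\setminus F$ of $0$, where $F\subseteq G(E)\setminus\{0\}$ is finite, is equivalent to the existence of a cofinite $V\ni 0$ with $(V\times V)\cap M_F=\emptyset$, where $M_F=\bigcup_{f\in F}M_f$. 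Under $(2)$ one takes $V=G(E)\setminus(\pi_1(M_F)\cup\pi_2(M_F))$; conversely, joint continuity yields a finite set $T$ with $M_f\subseteq(T\times G(E))\cup(G(E)\times T)$ for each $f\in F$, whose intersections with rows and columns are finite by the auxiliary lemma, so $M_f$ itself is finite.

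For $(2)\Leftrightarrow(3)$ I would carry out the case analysis of the product formula on $ab^{-1}\cdot cd^{-1}=uv^{-1}$. When $uv^{-1}=e$ is a vertex the analysis yields the explicit description $M_e=\{(ed^{-1},de^{-1}):d\in I_e\}$, so $|M_e|=|I_e|$ and $(2)\Rightarrow(3)$ follows. Conversely, for a general non-zero $uv^{-1}$ under $(3)$, each non-zero branch of the formula forces $a$ (respectively $d$) to be a prefix of $u$ (respectively $v$), of which there are finitely many, while the remaining free path lies in $I_{r(a)}$ (respectively $I_{r(d)}$), finite by $(3)$; hence $M_{uv^{-1}}$ is finite. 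The equivalence $(3)\Leftrightarrow(5)$ follows because $(u,v)\mapsto uv^{-1}$ gives a bijection $I_e\times I_e\to D_e$, whence $|D_e|=|I_e|^2$.

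For $(3)\Leftrightarrow(4)$, the direction $(4)\Rightarrow(3)$ is contrapositive. Assume some $I_e$ is infinite. If $E$ contains a cycle $\alpha$ based at $f=s(\alpha)=r(\alpha)$, then $\alpha,\alpha^{-1},f$ generate a subsemigroup of $G(E)$ isomorphic to the bicyclic monoid (by the relations $\alpha^{-1}\alpha=f$ and $f$ acting as identity on $\alpha,\alpha^{-1}$). Otherwise $E$ is acyclic; since then no non-trivial path from $e$ to $e$ exists, the product formula on $\{uv^{-1}:u,v\in I_e\}\cup\{0\}$ collapses to matrix-unit multiplication, so this set is a subsemigroup isomorphic to $\mathcal{M}_{I_e}$ and therefore contains $\mathcal{M}_\omega$. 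Either way $(4)$ fails. For $(3)\Rightarrow(4)$ I would invoke $(5)$: both the bicyclic monoid and $\mathcal{M}_\omega$ are inverse semigroups containing an infinite $\mathcal{D}$-class of non-zero elements; since any semigroup homomorphism between inverse semigroups automatically preserves the unique inverse (hence Green's $\mathcal{L}$- and $\mathcal{R}$-relations, which in inverse semigroups are determined by $a^{-1}a$ and $aa^{-1}$), a semigroup embedding into $G(E)$ is an inverse-semigroup embedding and transfers $\mathcal{D}$-equivalence, producing an infinite $\mathcal{D}$-class in $G(E)$ and contradicting $(5)$. The principal subtlety lies in $(1)\Leftrightarrow(2)$, where the cofinite neighborhood base at $0$ interacts delicately with the product topology; the cleanest route is to dispose of the points $(a,0)$ and $(0,b)$ by the finite-fibers lemma so that only the single point $(0,0)$ requires detailed analysis.
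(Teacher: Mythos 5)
Your proposal is correct; all five equivalences are established and each individual step checks out. The combinatorial core is the same as the paper's: the finiteness of the left/right ``division'' sets $\{x : xb=f\}$ (which the paper imports from Mesyan--Mitchell--Morayne--P\'eresse and you reprove by case analysis of the product formula), the identification of $M_e$ with $\{(eb^{-1},be^{-1}) : b\in I_e\}$, the dichotomy ``cycle $\Rightarrow$ bicyclic monoid, acyclic $\Rightarrow$ $\mathcal{M}_{I_e}$'', and the count $|D_e|=|I_e|^2$. The organization, however, is genuinely different: the paper closes a single cycle $(1)\Rightarrow(2)\Rightarrow(3)\Rightarrow(4)\Rightarrow(5)\Rightarrow(1)$, whereas you prove $(1)\Leftrightarrow(2)\Leftrightarrow(3)\Leftrightarrow(5)$ plus $(3)\Leftrightarrow(4)$, and this forces you to supply two arguments the paper avoids. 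For $(2)\Rightarrow(1)$ you build the neighborhood $V=G(E)\setminus(\pi_1(M_F)\cup\pi_2(M_F))$ directly from the finite sets $M_f$, while the paper instead proves $(5)\Rightarrow(1)$ using the containment $(D_e\cup\{0\})\cdot(D_f\cup\{0\})\subseteq D_e\cup D_f\cup\{0\}$ to remove finitely many whole $\mathcal{D}$-classes; your version is more self-contained, the paper's is slicker once the $\mathcal{D}$-class structure is in hand. For $(3)\Rightarrow(4)$ the paper extracts, from a hypothetical embedded bicyclic monoid or $\mathcal{M}_\omega$, an explicit cycle or an explicit infinite family of paths with common range, whereas you argue abstractly that a semigroup embedding between inverse semigroups preserves inverses and hence Green's $\mathcal{R}$, $\mathcal{L}$ and $\mathcal{D}$, so an infinite $\mathcal{D}$-class would be transported into $G(E)$, contradicting $(5)$; this is a clean and more general argument, at the cost of invoking the standard facts that inverses are preserved by homomorphisms into inverse semigroups and that $\mathcal{R}$ and $\mathcal{L}$ are characterized by $aa^{-1}$ and $a^{-1}a$. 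Both routes are valid; yours trades the paper's hands-on path manipulations for a bit more inverse-semigroup theory and, in exchange, gives direct proofs of the converse implications $(2)\Rightarrow(1)$ and $(3)\Rightarrow(2)$ that the paper only obtains by going around the cycle.
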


\begin{proof}
$(1)\Rightarrow(2)$. Suppose that $(G(E),\tau_{c})$ is a topological semigroup and fix an arbitrary non-zero element $uv^{-1}\in G(E)$. Observe that, by~\cite[Lemma 1]{Mesyan-Mitchell-Morayne-Peresse-2013}, for a fixed non-zero element $ab^{-1}\in G(E)$ the subsets
$$A_{ab^{-1}}=\{(ab^{-1},xy^{-1})\mid ab^{-1}\cdot xy^{-1}=uv^{-1}\}\subset G(E){\times}G(E)\quad \hbox{ and }$$
$$B_{ab^{-1}}=\{(xy^{-1},ab^{-1})\mid xy^{-1}\cdot ab^{-1}=uv^{-1}\}\subset G(E){\times}G(E)$$ are finite. The continuity of the semigroup operation in $(G(E),\tau_c)$ yields an open neighborhood $V=G(E)\setminus\{x_1,\ldots, x_n\}$ of $0$ such that $V\cdot V\subseteq G(E)\setminus\{uv^{-1}\}$. Observe that $M_{uv^{-1}}\subset (\cup_{i=1}^n (A_{x_i}\cup B_{x_i}))\cup(V{\times} V)$. Since $uv^{-1}\notin V\cdot V$ we obtain that $M_{uv^{-1}}\cap (V{\times} V)=\emptyset$. Since the sets $A_{x_i}$ and $B_{x_i}$ are finite the set $M_{uv^{-1}}$ is finite as well.

$(2)\Rightarrow (3)$. Suppose that there exists a vertex $e\in E^0$ such that the set $I_e=\{x\in \op{Path}(E)\mid r(x)=e\}$ is infinite. Then $\{(x^{-1},x)\mid x\in I_e\}$ is an infinite subset of $M_{e}$ which contradicts condition $(2)$.

$(3)\Rightarrow (4)$. Suppose that $G(E)$ contains an isomorphic copy of the bicyclic monoid. Then there exists an element $uv^{-1}\in G(E)$ such that $uv^{-1}\cdot uv^{-1}\notin \{0,uv^{-1}\}$. The definition of the semigroup operation in $G(E)$ implies that either $v=uw$ or $u=vw$. Then in both cases $s(w)=r(u)=r(v)$ and $r(w)=r(u)=r(v)$ which implies that $w$ is a cycle. Then the set $I_{r(w)}=\{u\in \op{Path}(E)\mid r(u)=r(w)\}$ is infinite, because it contains the set $\{w^n\mid n\in\N\}$, which contradicts condition $(3)$.

Suppose that $G(E)$ contains the semigroup of $\omega{\times}\omega$-matrix units $\mathcal{M}_{\omega}$. Observe that for each non-zero idempotent $e\in \mathcal{M}_{\omega}$ the set $\{x\in \mathcal{M}_{\omega}\mid x\cdot x^{-1}=e\}$ is infinite. Then there exists an idempotent $vv^{-1}\in G(E)\setminus\{0\}$ and an infinite subset $A=\{a_ib_i^{-1}\mid i\in\omega\}$ of $G(E)\setminus\{0\}$ such that
$$a_ib_i^{-1}\cdot (a_ib_i^{-1})^{-1}=a_ib_i^{-1}\cdot b_ia_i^{-1}=a_ia_i^{-1}=vv^{-1}$$
for each $i\in\omega$.
The above equation implies that $a_i=v$ for each $i\in\omega$. Since the set $A$ is infinite we obtain that the set $B=\{b_i\mid i\in \omega\}$ is infinite. Observe that $r(b_i)=r(a_i)=r(v)$ for each $i\in\omega$. Then the set $I_{r(v)}$ is infinite, because it contains the set $B$, which contradicts condition $(3)$.

$(4)\Rightarrow (5)$.
Suppose that there exists a vertex $e$ such that the $\mathcal{D}$-class $D_e$ is infinite. Then one of the following two cases holds:
\begin{itemize}
\item[$(i)$] there exists a cycle $u$ such that $s(u)=r(u)=e$;
\item[$(ii)$] The set $I_e$ contains no cycles.
\end{itemize}
Consider case $(i)$. Fix an arbitrary cycle $u\in D_e$. By $S$ we denote the subsemigroup of $G(E)$ which is generated by two elements $u$ and $u^{-1}$. Routine verifications show that the semigroup $S$ is isomorphic to the bicyclic monoid ($e=u^{-1}u$ is the identity of $S$) which contradicts condition $(4)$.

Consider case $(ii)$. By~\cite[Corollary~5]{Bardyla-2018(1)}, $D_e\cup\{0\}$ is a subsemigroup of $G(E)$ which is isomorphic to the semigroup of
$|I_e|{\times}|I_e|$-matrix units. Observe that $|I_e|$ is an infinite cardinal. Since for each cardinals $k\leq t$ the semigroup $\mathcal{M}_{k}$ is a subsemigroup of $\mathcal{M}_{t}$ we obtain that $G(E)$ contains the semigroup $\mathcal{M}_{\omega}$ which contradicts condition $(4)$.

$(5)\Rightarrow (1)$. Suppose that each $\mathcal{D}$-class of a GIS $G(E)$ is finite. Then the graph $E$ is acyclic. By~\cite[Lemma 3]{Bardyla-2018}, $(G(E),\tau_{c})$ is a semitopological semigroup. Since each non-zero element of $(G(E),\tau_{c})$ is isolated the semigroup operation in $(G(E),\tau_{c})$ is continuous if it is continuous at the point $(0,0)\in G(E){\times}G(E)$. Fix an arbitrary open neighborhood $U$ of $0$. Let $G(E)\setminus U=\{u_1v_1^{-1},\ldots, u_nv_n^{-1}\}$. Put $V=G(E)\setminus (\cup_{i=1}^nD_{r(u_i)})$. Since each $\mathcal{D}$-class is finite $V$ is an open neighborhood of $0$. The inclusion $V\cdot V\subseteq U$ follows from~\cite[Lemma~1]{Bardyla-2018(1)} which states that $(D_e\cup \{0\})\cdot (D_f\cup\{0\})\subseteq D_e\cup D_f\cup\{0\}$. Hence $(G(E),\tau_{c})$ is a topological semigroup.
\end{proof}

For an arbitrary semigroup $S$ by $E(S)$ we denote the set of all idempotents of $S$. If $S$ is an inverse semigroup, then $E(S)$ is a commutative subsemigroup of $S$. Moreover, $E(S)$ is a semilattice with respect to the following natural partial order: $e\leq f$ iff $e=e\cdot f=f\cdot e$, for $e,f\in E(S)$.

\begin{lemma}\label{lemmacomp}
Let $G(E)$ be a topological GIS such that $E(G(E))$ is compact. Then $G(E)$ is closed in each topological semigroup $S$ which contains $G(E)$ as a subsemigroup.
\end{lemma}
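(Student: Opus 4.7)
Plan: Take $s\in\overline{G(E)}^S$ and show $s\in G(E)$. If $s=0$ this is immediate, so assume $s\neq 0$ and fix a net $(g_\alpha)$ in $G(E)$ with $g_\alpha\to s$; Hausdorffness of $S$ lets us suppose each $g_\alpha\neq 0$, and we write $g_\alpha=u_\alpha v_\alpha^{-1}$ in the canonical form.

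The first part of the strategy is to use compactness of $E(G(E))$ to ``pinch'' $s$ between two idempotents of $G(E)$. The nets $(g_\alpha g_\alpha^{-1})$ and $(g_\alpha^{-1}g_\alpha)$ lie in the compact set $E(G(E))$, so by two successive subnet passages one can arrange that along a common subnet $g_\alpha\to s$, $g_\alpha g_\alpha^{-1}\to e$ and $g_\alpha^{-1}g_\alpha\to f$ for some $e,f\in E(G(E))$. Joint continuity of the multiplication in $S$, applied to the inverse-semigroup identity $g=gg^{-1}g$ (which holds in $G(E)\subseteq S$), then yields $s=es=sf$. If either $e=0$ or $f=0$ one would conclude $s=0$, contradicting $s\neq 0$; so both are nonzero, and hence equal to $\epsilon\epsilon^{-1}$ and $\phi\phi^{-1}$ respectively for uniquely determined paths $\epsilon,\phi\in\operatorname{Path}(E)$.

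The second part is to upgrade these convergences to being ``eventually equal''. Here I would invoke the cited fact that every non-zero element of a semitopological GIS is isolated (the observation made just before Corollary~\ref{lemma1}): there exist $S$-open sets $U\ni e$ and $V\ni f$ with $U\cap G(E)=\{e\}$ and $V\cap G(E)=\{f\}$, so $g_\alpha g_\alpha^{-1}=e$ and $g_\alpha^{-1}g_\alpha=f$ hold eventually. By uniqueness of the canonical form in $G(E)$ this forces $u_\alpha=\epsilon$ and $v_\alpha=\phi$ eventually, so $g_\alpha$ is eventually equal to the constant $\epsilon\phi^{-1}$; Hausdorff uniqueness of limits in $S$ then gives $s=\epsilon\phi^{-1}\in G(E)$.

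The only delicate point is the first step: one must ensure that a single subnet captures convergence of $g_\alpha$, $g_\alpha g_\alpha^{-1}$ and $g_\alpha^{-1}g_\alpha$ simultaneously. This is precisely where the compactness assumption on $E(G(E))$ is used, and it must be applied twice via successive subnet extraction. Everything after that is essentially Hausdorff uniqueness of limits plus the rigidity given by isolation of non-zero points of $G(E)$.
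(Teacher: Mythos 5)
Your argument is correct, but it is organized quite differently from the paper's. The paper argues by contradiction: it takes $s\in\overline{G(E)}\setminus G(E)$, separates $s$ from $0$ by disjoint neighborhoods $U(s),U(0)$, shrinks them so that $V(s)\cdot V(0)\cup V(0)\cdot V(s)\subseteq U(0)$, and then uses compactness of $E(G(E))$ in the form ``all but finitely many idempotents $uu^{-1}$ lie in $V(0)$'' (a compact set whose non-zero points are isolated meets the complement of any neighborhood of $0$ in a finite set); since infinitely many paths $u$ occur as left or right factors of elements of $V(s)\cap G(E)$, some product $uu^{-1}\cdot uv^{-1}$ or $vu^{-1}\cdot uu^{-1}$ lands an element of $V(s)$ inside $U(0)$, a contradiction. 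You instead give a direct limit-identification argument: compactness is used to extract subnets along which $g_\alpha g_\alpha^{-1}\to e$ and $g_\alpha^{-1}g_\alpha\to f$ in $E(G(E))$, the identity $g=gg^{-1}g=g(g^{-1}g)$ pins down $s=es=sf$, and isolation of the non-zero idempotents $e=\epsilon\epsilon^{-1}$, $f=\phi\phi^{-1}$ forces $u_\alpha=\epsilon$, $v_\alpha=\phi$ eventually, so $s=\epsilon\phi^{-1}\in G(E)$. Both proofs rest on the same two ingredients (compactness of $E(G(E))$ and isolation of non-zero elements of a semitopological GIS); yours trades the paper's finite pigeonhole for subnet extraction and, as a small bonus, exhibits the limit explicitly. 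The only step you should spell out is that $0\cdot s=s\cdot 0=0$ for $s\in\overline{G(E)}$ (needed to rule out $e=0$ or $f=0$): it follows by continuity from $0\cdot g_\alpha=0$, and the paper records the analogous fact explicitly.
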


\begin{proof}
Assuming the contrary let $S$ be a topological semigroup which contains $G(E)$ as a dense proper subsemigroup. Fix an arbitrary $s\in S\setminus G(E)$ and disjoint open neighborhoods $U(s)$ and $U(0)$ of $s$ and $0$, respectively. The continuity of the semigroup operation in $S$ yields that $s\cdot 0=0\cdot s=0$ for each $s\in S$. Since $S$ is a topological semigroup there exist open neighborhoods $V(s)$ and $V(0)$ of $s$ and $0$, respectively, such that $V(s)\subseteq U(s)$, $V(0)\subseteq U(0)$ and $V(s)\cdot V(0)\cup V(0)\cdot V(s)\subseteq U(0)$.
Since the set $V(s)\cap G(E)$ is infinite we obtain that the set
$$A=\{u\in \op{Path}(E)\mid \hbox{ there exists a path }v \hbox{ such that } uv^{-1}\in V(s)\cap G(E)\hbox{ or } vu^{-1}\in V(s)\cap G(E) \}$$
is infinite as well. By the compactness of $E(G(E))$, the set $E(G(E))\setminus V(0)$ is finite. Then there exist elements $u,v\in \op{Path}(E)$ such that $uu^{-1}\in V(0)$ and either $uv^{-1}\in V(s)$ or $vu^{-1}\in V(s)$. Hence either $uv^{-1}=uu^{-1}\cdot uv^{-1}\in V(0)\cdot V(s)\subseteq U(0)$ or $vu^{-1}=vu^{-1}\cdot uu^{-1}\in V(s)\cdot V(0)\subseteq U(0)$ which contradicts to the choice of the sets $U(0)$ and $U(s)$.
\end{proof}

\section{Main theorem}
By $E_1\sqcup E_2$ we denote the disjoint union of graphs $E_1$ and $E_2$.
Recall that for a fixed vertex $e\in E^0$, $I_e=\{u\in \op{Path}(E)\mid r(u)=e\}$.
Now we are going to formulate the main theorem of this paper.

\begin{theorem}\label{main}
Let a graph inverse semigroup $G(E)$ be a dense subsemigroup of a CLP-compact topological semigroup $S$. Then the following statements hold:
\begin{itemize}
\item[$(1)$] there exists a cardinal $k$ such that $E=(\sqcup_{\alpha\in k} E_{\alpha})\sqcup F$ where the graph
$F$ is acyclic and
for each $\alpha\in k$ the graph $E_{\alpha}$ consists of one vertex and one loop;
\item[$(2)$] if the graph $F$ is non-empty, then for each vertex $f\in F^0$ the set $I_f$ is finite and the semigroup $G(F)$ is a compact subset of $G(E)$;
\item[$(3)$] each open neighborhood of $0$ contains all but finitely many subsets $G(E_{\alpha})\subset G(E)$, $\alpha\in k$.
\end{itemize}
\end{theorem}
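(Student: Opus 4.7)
The overall strategy is to combine Corollary~\ref{cor1}---which guarantees that $\overline{X}^S$ is countably compact at $X$ for every $X\subseteq G(E)$---with Lemma~\ref{lemma0} and the algebraic characterizations of Theorem~\ref{th1}. The guiding principle is that any infinite pairwise-orthogonal family of idempotents in $G(E)$ sits discretely in $S$, and its accumulation points, supplied by Corollary~\ref{cor1}, must themselves be idempotents of $S$ by joint continuity, which rigidly restricts the cyclic substructure of $E$.

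For part~(1), I would first show that any vertex $v\in E^0$ lying on a cycle must carry exactly one loop, have no other incident edges, and be disconnected from the rest of the graph. Each forbidden configuration---two loops at $v$, a loop together with another edge at $v$, or a cycle of length $\ge 2$ through $v$---yields either an embedded $\mathcal{P}_\lambda$ with $\lambda\ge 2$ or, via the argument used in Theorem~\ref{th1}$(3)\Rightarrow(4)$, an embedded $\mathcal{M}_\omega\subseteq D_w\cup\{0\}\subseteq G(E)$ for a suitable target vertex $w$. I then translate each algebraic obstruction into a topological one by extracting an infinite family $\{e_n\}$ of pairwise orthogonal non-zero idempotents from the offending substructure, taking an accumulation point $e^*\in S$ (Corollary~\ref{cor1}), and using joint continuity to force $e^*\cdot e_n=0$ for all $n$ and $(e^*)^2=e^*$; Lemma~\ref{lemma0}, applied with $Z=\{e_n\}\subseteq Y=G(E)\setminus\{0\}$, then produces the required contradiction. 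Once these configurations are excluded, setting $V_\ell\subseteq E^0$ to be the loop-bearing vertices and $F$ to be the full subgraph on $E^0\setminus V_\ell$ yields the decomposition claimed in~(1).

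For~(2), assuming $F\neq\emptyset$, I would prove $|I_f|<\omega$ for each $f\in F^0$ by the same orthogonal-idempotents argument applied inside the acyclic subsemigroup $G(F)$, using that $D_f\cup\{0\}\cong\mathcal{M}_{|I_f|}$. With every $I_f$ finite, Theorem~\ref{th1} turns $(G(F),\tau_c)$ into a compact topological semigroup whose idempotent semilattice is compact, and then Lemma~\ref{lemmacomp}, combined with the uniqueness of $\tau_c$ from Corollary~\ref{lemma1}, forces $G(F)\subseteq S$ to be closed and compact. Part~(3) follows by a similar orthogonality argument: the vertices $v_\alpha\in E_\alpha^0$ form a pairwise-orthogonal family of idempotents, and if infinitely many $G(E_\alpha)$ met $S\setminus U$ for some open neighborhood $U$ of $0$, choosing witnesses $x_\alpha$ and combining Corollary~\ref{cor1} with the decomposition from~(1)--(2) to identify the limit of $\{v_\alpha\}$ as $0$ would, via joint continuity and the identity $v_\alpha\cdot x_\alpha=x_\alpha$, force $x_\alpha\in U$ for cofinitely many $\alpha$, a contradiction.

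The principal obstacle is Step~(1). The known nonembedding results for $\mathcal{P}_\lambda$ ($\lambda\ge 2$) and $\mathcal{M}_\omega$ in compact-like semigroups (\cite{BardGut-2016(1),BardGut-2016(2)}) assume feeble compactness of the ambient space, whereas here only the strictly weaker CLP-compactness is available; Lemma~\ref{lemma0} is engineered as the correct substitute, distilling the same cofinite-discreteness information from a single clopen cover of $\overline{Z}$ that feeble compactness would supply, and the technical crux is rewiring the feeble-compactness style arguments to work with Lemma~\ref{lemma0} throughout.
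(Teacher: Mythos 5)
There is a genuine gap at the heart of your plan for parts (1) and (2). The mechanism you propose for excluding the forbidden configurations --- extract an infinite pairwise-orthogonal family $\{e_n\}$ of non-zero idempotents, take an accumulation point $e^*$ supplied by Corollary~\ref{cor1}/Lemma~\ref{lemma0}, and derive a contradiction from $(e^*)^2=e^*$ and $e^*\cdot e_n=0$ --- does not produce a contradiction: both relations are satisfied by $e^*=0$, and joint continuity shows only that such a family accumulates at $0$ and nowhere else. Statement~(3) of Theorem~\ref{main} itself exhibits an infinite pairwise-orthogonal family of idempotents (the vertices of the $E_\alpha$) sitting inside $S$ with no contradiction. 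The genuine obstruction carried by $\mathcal{M}_\omega$ and by $\mathcal{P}_\lambda$, $\lambda\geq 2$, lives in their non-idempotent elements, and the paper does not re-prove it: it observes that by Corollary~\ref{cor1} the closure $\overline{T}$ of any subsemigroup $T\subseteq G(E)$ is countably compact at the dense set $T$, hence countably pracompact, hence \emph{feebly} compact, so the known non-embedding theorems \cite[Theorems~4.4 and~4.6]{BardGut-2016(1)} apply verbatim to $\overline{T}$. Your closing worry --- that those theorems assume feeble compactness while only CLP-compactness is available, so their proofs must be ``rewired'' through Lemma~\ref{lemma0} --- is therefore unfounded, and the rewiring you propose (through orthogonal idempotents alone) would not work.

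The same gap recurs in part (2). Finiteness of $I_f$ does follow from $D_f\cup\{0\}\cong\mathcal{M}_{|I_f|}$, but again via the feebly compact closure and \cite[Theorem~4.4]{BardGut-2016(1)}, not via orthogonal idempotents. More seriously, compactness of $G(F)$ \emph{as a subspace of $S$} does not follow from Theorem~\ref{th1} together with Lemma~\ref{lemmacomp}: Theorem~\ref{th1} only says that the abstract topology $\tau_c$ makes $G(F)$ a compact topological semigroup, whereas Lemma~\ref{lemmacomp} needs $E(G(F))$ to be compact in the topology induced from $S$, i.e.\ one must first show that the idempotents of $G(F)$ accumulate only at $0$ in $S$ and that $\overline{G(F)}$ acquires no new idempotents. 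This is precisely the content of Theorem~\ref{th2}, whose proof occupies Section~4 and hinges on the unary-tree analysis (Lemmas~\ref{TT}--\ref{thT}) exploiting the non-idempotent elements $(0,m)$ and $(0,m)^{-1}$; the paper's proof of part (2) simply applies Theorem~\ref{th2} to the feebly compact semigroup $\overline{G(F)}$. Your identification of the forbidden graph configurations in part (1) and your sketch of part (3) are essentially the paper's, and the latter is sound as described.
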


The proof of Theorem~\ref{main} could be found in Section~\ref{proof} after some preparatory work made in Section~\ref{acycl}

\section{Embedding of graph inverse semigroups over acyclic graphs into CLP-compact topological semigroups}\label{acycl}

We start with a formulation of the main result of this section which helps us to prove statement $(2)$ of Theorem~\ref{main}.

\begin{theorem}\label{th2}
Let $E$ be an acyclic graph. Then GIS $G(E)$ embeds as a dense subsemigroup into a CLP-compact topological semigroup $S$ iff $G(E)$ is compact, i.e., $G(E)=(G(E),\tau_{c})$.
\end{theorem}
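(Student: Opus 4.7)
My plan is to handle each direction of the equivalence separately. The ``if'' direction is immediate: if $(G(E),\tau_c)$ is a compact topological semigroup, then it is itself a CLP-compact topological semigroup into which $G(E)$ embeds densely via the identity map, so nothing further is required.

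For the ``only if'' direction, I fix a dense embedding of $G(E)$ into a CLP-compact topological semigroup $S$ and aim to show that every $\mathcal{D}$-class of $G(E)$ is finite; combined with Theorem~\ref{th1} and the fact that $\tau_c$ is the unique compact semitopological topology on $G(E)$, this will force $(G(E),\tau_c)$ to be a compact topological semigroup, as required. Since $E$ is acyclic, the case analysis of $(4)\Rightarrow(5)$ in Theorem~\ref{th1} rules out the bicyclic alternative and reduces the problem to showing that $I_e$ is finite for every $e\in E^0$.

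The technical heart is to establish finiteness of each $I_e$ by contradiction. Assuming $I_e$ is infinite, the subsemigroup $D_e\cup\{0\}\subseteq G(E)$ is isomorphic to the infinite matrix-unit semigroup $\mathcal{M}_{I_e}$ (cf.\ \cite[Corollary~5]{Bardyla-2018(1)}, used in the proof of Theorem~\ref{th1}), and its closure $T=\overline{D_e\cup\{0\}}^S$ is a topological subsemigroup of $S$ in which $D_e\cup\{0\}$ is dense by construction. Applying Corollary~\ref{cor1} with $X=D_e\cup\{0\}$ shows that $T$ is countably compact at this dense subset, so $T$ is countably pracompact and hence feebly compact. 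But \cite[Theorem~4.4]{BardGut-2016(1)} says precisely that for an infinite set $X$ the semigroup $\mathcal{M}_X$ cannot be densely embedded into any feebly compact topological semigroup, a direct contradiction.

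The main obstacle, or rather the key insight, is to recognise that although $S$ is only assumed to be CLP-compact, one can extract a \emph{feebly compact} closed subsemigroup $T$ containing $\mathcal{M}_{I_e}$ as a dense subsemigroup, by combining Corollary~\ref{cor1} (itself a consequence of Lemma~\ref{lemma0}) with the preliminary chain ``countably pracompact $\Rightarrow$ feebly compact''. Once this reduction is in place, the classical dense-embedding obstruction for infinite matrix-unit semigroups closes the argument without any further direct topological analysis inside $S$.
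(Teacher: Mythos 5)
Your ``if'' direction is fine, and your matrix--units argument is correct as far as it goes: if some $I_e$ were infinite, then $\overline{D_e\cup\{0\}}^S$ would be a feebly compact (indeed countably pracompact, by Corollary~\ref{cor1}) topological semigroup containing $\mathcal{M}_{I_e}$ densely, contradicting \cite[Theorem~4.4]{BardGut-2016(1)}; this is essentially the same trick the paper uses in the proof of Theorem~\ref{main} to kill the sets $B_e$ and $C_e$. So you do prove that every $\mathcal{D}$-class is finite and hence, via Theorem~\ref{th1}, that $G(E)$ \emph{admits} a compact semigroup topology.

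The gap is that this is not what the theorem asserts. The conclusion ``$G(E)=(G(E),\tau_c)$'' means that the subspace topology which $G(E)$ inherits from $S$ coincides with $\tau_c$, i.e.\ that $G(E)$ is a compact subspace of $S$ (and therefore closed, so $S=G(E)$); this stronger reading is exactly how the theorem is invoked later, in statement $(2)$ of Theorem~\ref{main}, where $G(F)$ is claimed to be a compact \emph{subset}. Your argument never touches the actual topology induced on $G(E)$ by $S$: knowing that the abstract semigroup has finite $\mathcal{D}$-classes does not exclude, a priori, a dense embedding of a \emph{discrete} copy of $G(T)$ ($T$ the unary tree, all of whose $\mathcal{D}$-classes are finite) into some strictly larger CLP-compact $S$ in which a chain $L_n^*$ of idempotents converges to a new idempotent $s_n\neq 0$. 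Ruling out precisely such configurations is the technical heart of the paper's proof --- Lemmas~\ref{c1}, \ref{TT}, \ref{lemma4}, \ref{lemmaT}, \ref{thT} and \ref{lemmacomp}, together with the case split on whether $E(S)=E(G(E))$ --- and none of it is replaced by your reduction. That this extra work is genuinely needed is illustrated by Remark~\ref{rem}: for a graph with a cycle, $\mathcal{P}_1$ with isolated zero does embed densely into a countably pracompact (hence CLP-compact) semigroup with a topology different from $\tau_c$, so an argument that only uses an algebraic obstruction inside $G(E)$ cannot by itself force the induced topology to be $\tau_c$.
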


Proof of Theorem~\ref{th2} could be found at the end of this section after some preparatory work made in lemmas~\ref{l0}-\ref{thT}.

\begin{lemma}\label{l0}
Let $E$ be an acyclic graph and $G(E)$ be a dense subsemigroup of a topological semigroup $S$. Then $s^2=0$ for each non-idempotent element $s\in S$.
\end{lemma}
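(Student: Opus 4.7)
The plan is to first check the statement inside $G(E)$ itself, using the product formula together with acyclicity, and then lift it to all of $S$ by a density and joint-continuity argument.

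First I would verify the statement for elements of $G(E)$ themselves. Fix a non-idempotent $uv^{-1}\in G(E)\setminus\{0\}$, i.e.\ $u\neq v$. By the product formula for $G(E)$, $(uv^{-1})(uv^{-1})$ can be non-zero only if there exists a path $w$ with either $u=vw$ or $v=uw$. The case $|w|=0$ would force $u=v$, contradicting non-idempotence. If $|w|>0$, a direct computation of source and range combined with the standing equality $r(u)=r(v)$ shows that $s(w)=r(w)$, i.e., $w$ is a cycle, contradicting acyclicity of $E$. Hence $(uv^{-1})^2=0$ whenever $uv^{-1}$ is a non-idempotent non-zero element of $G(E)$.

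Next I would handle a general $s\in S$ by contradiction. Suppose $s$ is non-idempotent while $s^2\neq 0$. Since $S$ is Hausdorff, pick disjoint open neighbourhoods $U_1\ni s^2$ and $U_2\ni 0$. Joint continuity of multiplication at $(s,s)$ supplies an open $V\ni s$ with $V\cdot V\subseteq U_1$; in particular $0\notin V\cdot V$. For every $x\in V\cap G(E)$ we then have $x^2\in V\cdot V$, so $x^2\neq 0$, and by the first step $x$ must be an idempotent of $G(E)$.

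Finally, density of $G(E)$ in $S$ yields a net $(e_\alpha)\subseteq V\cap G(E)$ with $e_\alpha\to s$, where each $e_\alpha$ is idempotent. Joint continuity of multiplication gives $e_\alpha^2\to s^2$, while $e_\alpha^2=e_\alpha\to s$; Hausdorffness then forces $s^2=s$, contradicting the non-idempotence of $s$. Hence $s^2=0$, as required. The only non-routine ingredient is the combinatorial observation inside $G(E)$; the main subtlety there is that both branches of the product formula (the $u=vw$ branch and the $v=uw$ branch) must be checked, and in each one the standing equality $r(u)=r(v)$ is what collapses $s(w)$ and $r(w)$ to a common vertex, producing the forbidden cycle.
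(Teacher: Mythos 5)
Your proof is correct and follows essentially the same route as the paper: first establish $x^2=0$ for every non-idempotent $x\in G(E)$, then lift this to all of $S$ by density and joint continuity (the paper phrases the second step via closedness of $E(S)$ and the observation that $0\in V^2$ for every neighbourhood $V$ of $s$, which is equivalent to your net argument). The only cosmetic difference is in the first step, where the paper cites the structural fact that $D_e\cup\{0\}$ is a semigroup of $I_e{\times}I_e$-matrix units instead of computing directly from the product formula and acyclicity; both yield the same conclusion.
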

\begin{proof}
By~\cite[Corollary~5]{Bardyla-2018(1)}, for each $e\in E^0$ the set $D_e\cup\{0\}$ is isomorphic to the semigroup of $I_e{\times}I_e$-matrix units. Hence $x\cdot x=0$ for each non-idempotent element $x\in G(E)$.
Fix an arbitrary element $s\in S\setminus E(S)$.
Since $E(S)$ is closed in $S$ and $G(E)$ is dense in $S$ we obtain that $V\cap (G(E)\setminus E(S))\neq\emptyset$ for each open neighborhood $V$  of $s$. Hence $0\in V^2$ for each open neighborhood $V$ of $s$ witnessing that $s^2=0$.
\end{proof}

\begin{lemma}\label{c1}
Let $E$ be an acyclic graph and $G(E)$ be a dense subsemigroup of a topological semigroup $S$. Then $E(S)=\overline{E(G(E))}$ and the set $E(S)\setminus \{0\}$ is open in $S$;
\end{lemma}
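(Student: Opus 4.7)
\medskip

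The plan is to treat the two assertions separately, leaning in both cases on Lemma~\ref{l0} (non-idempotents square to $0$) together with density and continuity of multiplication.

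First I would show $E(S)=\overline{E(G(E))}$. The inclusion $\overline{E(G(E))}\subseteq E(S)$ is automatic because in any Hausdorff topological semigroup $E(S)$ is closed: it is the equalizer of the continuous maps $s\mapsto s^{2}$ and $s\mapsto s$, and equalizers of continuous maps into Hausdorff spaces are closed. For the nontrivial inclusion, fix $s\in E(S)$ and an arbitrary open neighborhood $U$ of $s$. By continuity of multiplication at $(s,s)$ (where $s\cdot s=s\in U$) there is an open $V$ with $s\in V\subseteq U$ and $V\cdot V\subseteq U$. Density of $G(E)$ in $S$ supplies an element $x\in V\cap G(E)$. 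If $x\in E(G(E))$, then $x\in U\cap E(G(E))$ and we are done. Otherwise, Lemma~\ref{l0} (applied inside $G(E)$, or indeed for $x$ viewed as a non-idempotent element of $S$) forces $x^{2}=0$; but $x^{2}\in V\cdot V\subseteq U$, so $0\in U\cap E(G(E))$. Either way $U$ meets $E(G(E))$, proving $s\in\overline{E(G(E))}$.

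For the second assertion, let $s\in E(S)\setminus\{0\}$. By Hausdorffness pick disjoint open neighborhoods $U_{s}\ni s$ and $U_{0}\ni 0$. Continuity of multiplication at $(s,s)$ yields an open $V$ with $s\in V\subseteq U_{s}$ and $V\cdot V\subseteq U_{s}$. For any $t\in V$, if $t\notin E(S)$ then Lemma~\ref{l0} gives $t^{2}=0$; however $t^{2}\in V\cdot V\subseteq U_{s}$ while $0\in U_{0}$, contradicting $U_{s}\cap U_{0}=\emptyset$. Hence $V\subseteq E(S)$, and since $V\subseteq U_{s}$ avoids $0$, in fact $V\subseteq E(S)\setminus\{0\}$. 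Thus every point of $E(S)\setminus\{0\}$ is interior, which is the claim.

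There is no real obstacle here; the only subtlety worth double-checking is that Lemma~\ref{l0} applies to elements of $S$ (not just of $G(E)$), which is exactly its content. Acyclicity enters only through Lemma~\ref{l0}, and the rest is a clean density-plus-continuity argument separating $s$ from $0$ using the Hausdorff hypothesis built into the paper's conventions.
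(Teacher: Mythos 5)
Your proof is correct and rests on the same two ingredients as the paper's: Lemma~\ref{l0} plus continuity of the squaring map, with density and Hausdorffness doing the rest. The only cosmetic difference is that the paper gets openness of $E(S)\setminus\{0\}$ in one stroke by observing that $\{x\in S\mid x^2=0\}$ is the preimage of the closed point $0$ under the continuous map $s\mapsto s^2$, whereas you run the equivalent pointwise separation argument.
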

\begin{proof}
Put $A=\{x\in S\mid x^2=0\}$.
By Lemma~\ref{l0}, $E(S)\setminus\{0\}=S\setminus A$. Since the map $f:S\rightarrow S$, $f(s)=s^2$ is continuous, the set $A$ is closed. Hence the set $E(S)\setminus \{0\}$ is open in $S$. Fix an arbitrary $s\in E(S)\setminus E(G(E))$. Since $G(E)$ is dense in $S$ and $s\notin A$ each open neighborhood of $s$ contains infinitely many $x\in G(E)\setminus A\subset E(G(E))$. Hence $s\in \overline{E(G(E))}$.
\end{proof}




Further we shall need properties of the graph inverse semigroup $G(T)$ where by $T$ we denote the unary tree (see picture below).

\centerline{
\xymatrix{
    \bullet_{0} \ar[r]^{(0,1)}&\bullet_{1} \ar[r]^{(1,2)}&\bullet_{2} \ar[r]^{(2,3)}& \bullet_{3} \cdots &\bullet_{n} \ar[r]^{(n,n+1)}& \bullet_{n+1} \cdots
    }
}
We enumerate vertices of the tree $T$ with non-negative integers and identify each edge $x$ with a pair of integers $(n,n+1)$ where $s(x)=n$ and $r(x)=n+1$.
For each positive integers $k\leq p$ by $(k,p)$ we denote the path $u$ such that $s(u)=k$ and $r(u)=p$ (for convenience we identify vertex $n$ with the pair $(n,n)$).
Observe that $(n,m)(n,m)^{-1}\leq(k,l)(k,l)^{-1}$ iff $n=k$ and $m\geq l$. Simply verifications show that each maximal chain (linear ordered subset) in $E(G(T))$ coincides with the set $L_n=\{(n,m)(n,m)^{-1}\mid m\geq n\}{\cup}\{0\}$ for some fixed $n\in \omega$. By $L_n^*$ we denote the set $L_n{\setminus}\{0\}$.

\begin{lemma}\label{TT}
Let $G(E)$ be a GIS over an acyclic graph, and $C\subset E(G(E))$ be an infinite chain. Then there exists a subgraph $T\subset E$ which is isomorphic to the unary three and $C\subset E(G(T))$.
\end{lemma}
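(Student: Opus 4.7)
The plan is to translate the chain condition on idempotents into a chain condition on paths under the prefix order, and then to read off a unary tree from this chain using acyclicity. First I would recall that the non-zero idempotents of $G(E)$ are exactly the elements of the form $uu^{-1}$ for $u\in\op{Path}(E)$, with the correspondence $u\leftrightarrow uu^{-1}$ a bijection. A direct check using the multiplication rule shows that $uu^{-1}\leq vv^{-1}$ iff $uu^{-1}\cdot vv^{-1}=uu^{-1}$ iff $v$ is a prefix of $u$, i.e.\ $u=vw$ for some $w\in\op{Path}(E)$. (This agrees with the description of the chains $L_n$ given just before the lemma.)

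Throwing away $0$ if necessary, write $C\setminus\{0\}=\{u_i u_i^{-1}\mid i\in I\}$, where $\{u_i\}$ is an infinite family of paths which is totally ordered by the prefix relation. Since two distinct paths comparable under prefix must have distinct lengths, and lengths lie in $\omega$, the chain of lengths is unbounded. Enumerating in increasing order of length gives a sequence $u_0\prec u_1\prec u_2\prec\cdots$ with each $u_i$ a proper prefix of $u_{i+1}$. In particular all $u_i$ share a common source vertex $v_0\in E^0$, and the sequence defines a well-defined infinite edge sequence $e_0,e_1,e_2,\ldots$ in $E$ (the $k$-th edge is the $k$-th edge of any $u_i$ whose length exceeds $k$), together with the sequence of vertices $v_{k+1}:=r(e_k)=s(e_{k+1})$.

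Next I would invoke acyclicity: if $v_i=v_j$ for some $i<j$, then $e_ie_{i+1}\cdots e_{j-1}$ would be a cycle, contradicting the hypothesis. Hence the vertices $v_0,v_1,\ldots$ are pairwise distinct, and consequently so are the edges $e_0,e_1,\ldots$. Let $T$ be the subgraph of $E$ with $T^0=\{v_k\mid k\in\omega\}$ and $T^1=\{e_k\mid k\in\omega\}$, with source and range inherited from $E$; the assignment $v_k\mapsto k$, $e_k\mapsto(k,k+1)$ is a graph isomorphism between $T$ and the unary tree.

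Finally, each $u_i$ is of the form $e_0e_1\cdots e_{|u_i|-1}$, hence a path in $T$, so $u_iu_i^{-1}\in E(G(T))$; together with $0\in E(G(T))$ this gives $C\subseteq E(G(T))$ as required. The only step requiring any real argument is the acyclicity step producing distinct vertices; everything else is bookkeeping around the bijection between idempotents and paths.
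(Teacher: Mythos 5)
Your proof is correct and follows essentially the same route as the paper's: both identify the chain of idempotents with a prefix-ordered family of paths, extract the resulting infinite edge sequence, and use acyclicity to see that it traces out an embedded copy of the unary tree. The only cosmetic difference is that the paper first extends $C$ to a maximal chain (which is automatically prefix-closed and grows one edge at a time), whereas you read the edge sequence directly off $C$ by enumerating its paths in increasing length; your version is, if anything, slightly more explicit about why acyclicity forces the vertices $v_0,v_1,\dots$ to be pairwise distinct.
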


\begin{proof}
Using the axiom of choice we can find a maximal chain $L$ which contains $C$. Easy to see that $L=\{u_nu_n^{-1}\}_{n\in\omega}\cup\{0\}$ where $u_0$ is some vertex and for each $n\in \omega$ there exists an edge $x_n\in E^1$ such that $u_{n+1}=u_nx_n$. Since a graph $E$ is acyclic for each $n\in\omega$ neither $u_n$ nor $x_n$ is a cycle. Then $T=(T^0, T^1,s_T,r_T)$ is a desired unary tree, where $T^0=\{r(u_n)\mid n\in\omega\}$, $T^1=\{x_n\mid n\in\omega\}$ and the function $s_T$ ($r_T$, resp.) is a restriction of the source (range, resp.) function $s$ ($r$, resp.) of the graph $E$ on the set $T^1$.
\end{proof}

\begin{lemma}\label{lemma4}
Let $G(T)$ be a semitopological semigroup.
If there exists $k\in \omega$ such that $0$ is not an accumulation point of the set $L_{k}^*$, then $0$ is not an accumulation point of the set $L_n^*$ for each $n\in\omega$.
\end{lemma}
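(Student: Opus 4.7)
The plan is to exploit the separate continuity of the multiplication in $G(T)$ in order to construct, for any pair $n,k\in\omega$, a continuous map $G(T)\to G(T)$ which fixes $0$ and maps $L_n^*$ into $L_k^*$ with finite fibers. Once such a map is in hand, the hypothesis that $0$ is not an accumulation point of $L_k^*$ pulls back immediately to the analogous statement for $L_n^*$, simply by taking the preimage of an open neighborhood of $0$ that meets $L_k^*$ in only finitely many points.

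To construct the map I distinguish two cases. When $n\leq k$ I set $\phi(x)=(n,k)^{-1}\cdot x\cdot(n,k)$; when $n>k$ I set $\psi(x)=(k,n)\cdot x\cdot(k,n)^{-1}$. Each map is a composition of a left translation and a right translation by fixed elements of $G(T)$, hence continuous by the semitopological assumption, and clearly fixes $0$. A direct computation using the multiplication rule in the graph inverse semigroup should yield that, in the first case, $\phi\bigl((n,m)(n,m)^{-1}\bigr)=(k,m)(k,m)^{-1}$ for every $m\geq k$, while the finitely many elements with $n\leq m<k$ all collapse onto the vertex $k=(k,k)(k,k)^{-1}\in L_k^*$; in the second case $\psi\bigl((n,m)(n,m)^{-1}\bigr)=(k,m)(k,m)^{-1}$ for every $m\geq n$, and the restriction is injective. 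Either way the image of $L_n^*$ is contained in $L_k^*$ and the fibers of the restricted map are finite (of size at most $k-n+1$ in the first case and singletons in the second).

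To finish, by hypothesis there is an open neighborhood $U$ of $0$ such that $U\cap L_k^*$ is finite. Then $\phi^{-1}(U)$ (respectively $\psi^{-1}(U)$) is an open neighborhood of $0$ whose intersection with $L_n^*$ is contained in the preimage of $U\cap L_k^*$ under the restriction of $\phi$ (resp.\ $\psi$) to $L_n^*$; by the finite-fiber property this intersection is finite. Consequently $0$ is not an accumulation point of $L_n^*$, as required.

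I expect the computational verification of the two formulas for $\phi$ and $\psi$ on $L_n^*$ to be the only delicate step, since one must keep track of which of the two clauses $u_2=v_1w$ or $v_1=u_2w$ in the multiplication rule applies for each value of $m$, and also confirm that no element of $L_n^*$ escapes $L_k^*\cup\{0\}$ under the map. Once these formulas are in place, the continuity transfer is routine.
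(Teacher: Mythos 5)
Your proposal is correct and uses essentially the same idea as the paper: the paper also splits into the cases $k<n$ and $n<k$ and applies the very same shifts $x\mapsto(k,n)\cdot x\cdot(k,n)^{-1}$ and $x\mapsto(n,k)^{-1}\cdot x\cdot(n,k)$, invoking separate continuity at $0$; it merely phrases the conclusion as a contradiction obtained from a single element $(n,m)(n,m)^{-1}$ landing in $U\cap L_k^*$, rather than via your preimage/finite-fiber formulation. Your computations of $\phi$ and $\psi$ on $L_n^*$ check out against the multiplication rule.
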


\begin{proof}
Let $k$ be a non-negative integer such that $0\notin \overline{L_k^*}$.
Suppose to the contrary that $0\in \overline{L_n^*}$ for some $n\in\omega$.
There are two cases to consider:
\begin{itemize}
\item[$(1)$] $k<n$;
\item[$(2)$] $n<k$.
\end{itemize}
Consider case $(1)$. Fix an arbitrary open neighborhood $U$ of $0$ such that $U\cap L_k^*=\emptyset$. Since $(k,n)\cdot 0\cdot (k,n)^{-1}=0$ the continuity of left and right shifts in $G(T)$ yields an open neighborhood $V$ of $0$ such that $(k,n)\cdot V\cdot (k,n)^{-1}\subseteq U$. Since $0\in \overline{L_n^*}$ we obtain that the set $V$ contains an element $(n,m)(n,m)^{-1}$ for some $m>n$. Hence $$(k,n)\cdot (n,m)(n,m)^{-1}\cdot (k,n)^{-1}=(k,m)(k,m)^{-1}\in U\cap L_k^*$$
 which contradicts to the choice of the set $U$.

To obtain a contradiction in case $(2)$ we consider the product $(n,k)^{-1}\cdot 0\cdot (n,k)=0$. Fix open neighborhoods $U$ and $V$ of $0$ such that $U$ does not intersect the set $L_k^*$ and $(n,k)^{-1}\cdot V\cdot (n,k)\subseteq U$. Since $0\in \overline{L_n^*}$ there exists an element $(n,m)(n,m)^{-1}\in V$ such that $m>k$. Then $$(n,k)^{-1}\cdot (n,m)(n,m)^{-1}\cdot (n,k)=(k,m)(k,m)^{-1}\in U\cap L_k^*$$ which yields a contradiction.
\end{proof}

\begin{lemma}\label{lemmaT}
Let $G(T)$ be a dense subsemigroup of a CLP-compact topological semigroup $S$. Then for each open neighborhood $U$ of $0$ there exists $n\in \omega$ such that $L_k\subset U$ for each $k>n$.
\end{lemma}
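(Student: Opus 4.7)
The plan is to argue by contradiction. Assume there exist an open neighborhood $U$ of $0$ and an infinite sequence $k_0<k_1<\cdots$ with $L_{k_i}\not\subseteq U$ for every $i$; for each $i$ pick $x_i=(k_i,m_i)(k_i,m_i)^{-1}\in L_{k_i}\setminus U$ (possible because $0\in U$). The set $X=\{x_i:i\in\omega\}$ is an infinite subset of $G(T)$ contained in the closed set $S\setminus U$, so by Corollary~\ref{cor1} it admits an accumulation point $p\in\overline{X}^S\subseteq S\setminus U$; in particular $p\neq 0$. Since each $x_i$ is idempotent and $E(S)$ is closed by Lemma~\ref{c1}, one has $p\in E(S)$. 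Furthermore $p\notin G(T)$: otherwise $p$ would be a non-zero (hence isolated in $G(T)$) point, and an open $W\subseteq S$ with $W\cap G(T)=\{p\}$ would contradict $p$ being an accumulation point of the subset $X\subseteq G(T)$.

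The heart of the proof is to show that $p$ annihilates $S$. Fix any non-zero $uv^{-1}\in G(T)$ and examine $uv^{-1}\cdot(k_i,m_i)(k_i,m_i)^{-1}$ through the GIS multiplication rule: for this product to be non-zero, either $(k_i,m_i)=vw$ or $v=(k_i,m_i)w$ must hold for some path $w$, and each possibility forces $s(v)=k_i$. Since the $k_i$ are distinct, this occurs for at most one index, so $uv^{-1}\cdot x_i=0$ for cofinitely many $i$. Removing the exceptional $x_i$ produces a cofinite $X'\subseteq X$ still accumulating at $p$ and satisfying $uv^{-1}\cdot X'\subseteq\{0\}$; continuity of left multiplication by $uv^{-1}$ then forces $uv^{-1}\cdot p\in\overline{\{0\}}=\{0\}$. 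The symmetric calculation for $x_i\cdot uv^{-1}$ (which requires $s(u)=k_i$) yields $p\cdot uv^{-1}=0$, so $p$ annihilates $G(T)$ on both sides.

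Density of $G(T)$ propagates this to all of $S$: for any $s\in S$ take a net $t_\alpha\in G(T)$ converging to $s$; when $s\neq 0$ one may assume $t_\alpha\in G(T)\setminus\{0\}$ eventually, and continuity then gives $p\cdot s=\lim p\cdot t_\alpha=0$ (and symmetrically $s\cdot p=0$). Applying this with $s=p$ yields $p=p^2=p\cdot p=0$, contradicting $p\neq 0$. The main obstacle is purely combinatorial: verifying through the GIS multiplication formula that a fixed element of $G(T)$ kills all but finitely many terms of any sequence drawn from distinct chains $L_{k_i}$; once this orthogonality is secured, CLP-compactness (via Corollary~\ref{cor1}), continuity, and density assemble routinely into the contradiction.
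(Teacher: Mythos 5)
Your proof is correct and follows essentially the same route as the paper: argue by contradiction, pick representatives $x_i\in L_{k_i}\setminus U$, obtain a non-zero accumulation point $p\in E(S)$ via Lemma~\ref{c1} and Corollary~\ref{cor1}, and use the orthogonality of idempotents from distinct chains to force $p=p^2=0$. The paper's endgame is just shorter: from $x_k\cdot x_m=0$ for $k\neq m$ it gets $0\in V\cdot V$ for every neighborhood $V$ of $p$, hence $p=0$ by Hausdorffness, avoiding your (valid but longer) detour through showing that $p$ annihilates all of $G(T)$ and then all of $S$ by density.
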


\begin{proof}
Suppose to the contrary that there exists an open neighborhood $U$ of $0$ such that the set $A=\{k\in\omega \mid L_k\setminus U\neq \emptyset\}$ is infinite. For each $k\in A$ fix any idempotent $x_k\in L_k\setminus U$. By Lemma~\ref{c1}, $E(S)=\overline{E(G(T))}$. Corollary~\ref{cor1} implies that the set $E(S)$ is countably compact at $E(G(T))$. Hence there exists an element $s\in E(S)$ which is an accumulation point of the set $\{x_k\mid k\in A\}$. Since $U\cap\{x_k\mid k\in A\}=\emptyset$, $s\neq 0$. On the other hand, $x_k\cdot x_m=0$, whenever $k\neq m$. Hence $0\in V^2$ for each open neighborhood $V$ of $s$ witnessing that $s=0$. The contradiction.
\end{proof}

\begin{lemma}\label{thT}
$G(T)$ embeds as a dense subsemigroup into a CLP-compact topological semigroup $S$ iff $G(T)$ is compact, i.e., $G(T)=(G(T),\tau_{c})$.
\end{lemma}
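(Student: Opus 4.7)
For the $(\Leftarrow)$ direction, in the unary tree $T$ each range--set $I_n=\{(k,n):k\le n\}$ is finite, so Theorem~\ref{th1} makes $(G(T),\tau_c)$ itself a compact topological semigroup; taking $S:=G(T)$ settles this direction.

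For $(\Rightarrow)$, assume $G(T)\subseteq S$ is dense with $S$ CLP--compact. The plan is to show that the topology $G(T)$ inherits from $S$ coincides with $\tau_c$; Corollary~\ref{lemma1} then makes $G(T)$ compact (hence closed in $S$), and density forces $G(T)=S$. Since non--zero elements of a semitopological GIS are isolated, it suffices to prove that $G(T)\setminus U$ is finite for every open $U\ni 0$ in $S$. Lemma~\ref{lemmaT} already handles the ``tail'', giving $L_k\subseteq U$ for all $k>N(U)$, so the residual task is to bound $L_k^*\setminus U$ for each $k\le N(U)$ and then propagate control to non--idempotent elements via joint continuity. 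I will first observe that $Y:=G(T)\setminus\{0\}$ is dense (by Lemma~\ref{lemmaT}) and discrete in $S$, which makes Lemma~\ref{lemma0} applicable.

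The central step is to prove $E(S)=E(G(T))$, i.e.\ to rule out every ``extra'' idempotent $s\in\overline{L_k^*}^S\setminus L_k^*$ with $s\ne 0$. A standard subnet extraction $(k,m_\alpha)(k,m_\alpha)^{-1}\to s$ with $m_\alpha\to\infty$ gives $s\cdot(k,j)(k,j)^{-1}=s$ for every $j\ge k$ and $s\cdot L_m^*=\{0\}$ for every $m\ne k$. Lemma~\ref{lemma4} supplies the dichotomy: either $0\in\overline{L_n^*}^S$ for every $n$, or $0\notin\overline{L_n^*}^S$ for any $n$. In the first case fix a net $z_\beta\in L_k^*$ with $z_\beta\to 0$ (available from $0\in\overline{L_k^*}$); joint continuity yields $s=s\cdot z_\beta\to s\cdot 0=0$, contradicting $s\ne 0$. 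For the second case I use the conjugation identity
\[
(0,k_0)^{-1}\cdot(0,m)(0,m)^{-1}\cdot(0,k_0)=(k_0,\max(m,k_0))(k_0,\max(m,k_0))^{-1}\in L_{k_0}^*,
\]
valid in $G(T)$ for every $m$ and every $k_0\ge 1$, together with Lemma~\ref{lemmaT} applied to the pre--image $\psi_{k_0}^{-1}(U)$ of a fixed $U\ni 0$ with $U\cap L_0^*=\emptyset$ under the continuous conjugation map $\psi_{k_0}(x)=(0,k_0)\cdot x\cdot(0,k_0)^{-1}$; since $\psi_{k_0}(L_{k_0}^*)\subseteq L_0^*$ is entirely outside $U$, one obtains $n_{\psi_{k_0}^{-1}(U)}\ge k_0$ for every $k_0$, and combining this with the infinite antichain $\{s_k\}_{k\in\omega}\subseteq E(S)\setminus G(T)$ of pairwise orthogonal non--zero idempotents (manufactured from $s$ by iterating the identity) yields a contradiction with Lemma~\ref{lemma4}.

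Once $E(S)=E(G(T))$, Lemma~\ref{lemma0} applied to $Z=L_k^*$ gives $\overline{L_k^*}^S=L_k^*\cup\{0\}$ compact for every $k$; Lemma~\ref{lemmaT} then assembles these into compactness of $E(G(T))$, and Lemma~\ref{lemmacomp} makes $G(T)$ closed in $S$. Density yields $G(T)=S$, and Corollary~\ref{lemma1} identifies the topology with $\tau_c$. The main obstacle is the second case of the dichotomy: unlike the first case, which uses only a clean absorption identity and convergence, the second case must extract a genuine topological contradiction from CLP--compactness alone --- strictly weaker than countable or feeble compactness --- and for this both the conjugation identity and the precise dichotomy of Lemma~\ref{lemma4} appear to be essential.
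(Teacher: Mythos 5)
Your forward direction is fine, and the skeleton of your converse (establish $E(S)=E(G(T))$, get compactness of $E(G(T))$ from Lemma~\ref{lemmaT} plus compactness of each $\overline{L_k^*}$, then close up via Lemma~\ref{lemmacomp} and Corollary~\ref{lemma1}) is exactly the paper's. Your handling of the first horn of the dichotomy from Lemma~\ref{lemma4} is also correct: the absorption identity $s\cdot (k,j)(k,j)^{-1}=s$ together with a net in $L_k^*$ converging to $0$ forces $s=0$, which is essentially what the paper imports from~\cite{Bardyla-Gutik-2012}.

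The gap is in the second horn ($0\notin\overline{L_n^*}$ for every $n$), which is where the real work of the lemma lies. Under that hypothesis the extra idempotents $s_k\in\overline{L_k^*}\setminus L_k^*$ genuinely exist and form a pairwise orthogonal antichain, and nothing you invoke contradicts this. Lemma~\ref{lemma4} is an implication whose hypothesis and conclusion both hold in this case, so it rules nothing out; and CLP-compactness gives no accumulation point of $\{s_k\}_{k\in\omega}$, because the clopen-cover argument of Lemma~\ref{lemma0} applies only to infinite subsets of the dense \emph{discrete} part $G(T)\setminus\{0\}$, whereas the $s_k$ lie in $S\setminus G(T)$ and their singletons need not be clopen. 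The inequality $n_{\psi_{k_0}^{-1}(U)}\geq k_0$ you extract from the conjugation identity concerns a different neighborhood of $0$ for each $k_0$ and yields no uniform statement, hence no contradiction. In short, the idempotent structure of $G(T)$ alone cannot kill this case. The paper's proof resolves it by leaving $E(G(T))$ entirely and analysing the closures of the non-idempotent sets $B^{+}=\{(0,m)\mid m\in\N\}$ and $B^{-}=\{(0,m)^{-1}\mid m\in\N\}$: if neither closure meets $S\setminus G(E)$, then $(0,n)$ and $(0,n)^{-1}$ eventually both enter any small neighbourhood $V$ of $0$ and their product is a non-zero idempotent excluded from a fixed neighbourhood of $0$, violating continuity at $(0,0)$; if, say, $\overline{B^{+}}$ contains some $x\notin G(E)$, then $s_0\cdot x=x$ manufactures elements $(0,n)(m,n)^{-1}$ arbitrarily close to $x$, and multiplying these by idempotents $(m,n)(m,n)^{-1}\in L_m^*$ (which lie in any neighbourhood of $0$ for large $m$ by Lemma~\ref{lemmaT}) pushes them back into a neighbourhood of $0$ disjoint from one of $x$. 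You announced the need to "propagate control to non-idempotent elements" in your plan but never carried it out, and that propagation is precisely the missing --- and essential --- step.
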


\begin{proof}
Suppose that $G(T)$ is a dense subsemigroup of a CLP-compact topological semigroup $S$. There are two cases to consider:
\begin{itemize}
\item[$(1)$] there exists $n\in \omega$ such that $0$ is an accumulation point of $L_n^*$;
\item[$(2)$] $0$ is not an accumulation point of $L_n^*$ for each $n\in \omega$;
\end{itemize}

Consider case $(1)$. By Lemma~\ref{lemma4}, $0$ is an accumulation point of $L_n^*$ for each $n\in \omega$. Observe that for an arbitrary $n\in\omega$, $L_n^*$ is isomorphic to the semilattice $(\omega,\max)$.  By~\cite[Theorem~2]{Bardyla-Gutik-2012}, zero is a unique accumulation point of $L_n^*$ in $S$ for each $n\in\omega$. By Lemma~\ref{lemma0}, the set $L_n=\overline{L_n^*}$ is compact. Lemma~\ref{lemmaT} provides that the semilattice $E(G(T))=\cup_{n\in\omega}L_n$ is compact. Lemma~\ref{lemmacomp} implies that $G(T)$ is closed in $S$ and hence $S=G(T)$. By Corollary~\ref{lemma1}, $G(T)$ is compact.

Consider case $(2)$. Running ahead, we will show that this case is not possible. By Corollary~\ref{cor1}, the set $E(S)=\overline{E(G(T))}$ is countably compact at $E(G(T))$. By~\cite[Theorem~2]{Bardyla-Gutik-2012}, for each non-negative integer $n$, $L_n^*$ has only one accumulation point which we denote by $s_n$. Lemma~\ref{lemma0} implies that the set $L_n^*\cup\{s_n\}$ is compact.

Consider the sets $B^{+}=\{(0,m)\mid m\in \N\}$ and $B^{-}=\{(0,m)^{-1}\mid m\in \N\}$. By Lemma~\ref{lemma0}, the set $\overline{B^+}$ ($\overline{B^-}$, resp.) is countably compact at the set $B^+$ ($B^-$, resp.).
There are three subcases to consider:
\begin{itemize}
\item[$(2.1)$] $(\overline{B^{+}}\cup \overline{B^{-}})\cap (S\setminus G(E))=\emptyset$;
\item[$(2.2)$] $\overline{B^{+}}\cap (S\setminus G(E))\neq \emptyset$;
\item[$(2.3)$] $\overline{B^{-}}\cap (S\setminus G(E))\neq \emptyset$.
\end{itemize}
Consider subcase $(2.1)$. In this subcase the only accumulation point of the sets $\overline{B^{+}}$ and $\overline{B^{-}}$ is $0$. Lemma~\ref{lemma0} implies that each open neighborhood of $0$ contains all but finitely many elements of the set $B^{+}\cup B^-$. The continuity of the semigroup operation yields an open neighborhood $V$ of $0$ such that $V\cdot V\subset S\setminus\{(0,0)\}$. However, there exists a positive integer $n\in \N$ such that $(0,n)^{-1}\in V$ and $(0,n)\in V$ which implies that $(0,0)=(0,n)^{-1}\cdot (0,n)\in V\cdot V\subset S\setminus\{(0,0)\}$. The obtained contradiction implies that subcase $(2.1)$ does not hold.

Consider subcase $(2.2)$.
Fix an arbitrary element $x\in \overline{B^+}\setminus G(E)$. Since $s_0=\lim_{m\in\omega}(0,m)(0,m)^{-1}$ and $(0,m)(0,m)^{-1}\cdot (0,m)=(0,m)$ the continuity of the semigroup operation in $S$ implies that $s_0\cdot x=x$. Fix an arbitrary open neighborhood $U(x)$ of $x$. Since $s_0\cdot x=x$ the continuity of the semigroup operation in $S$ yields open neighborhoods $U(s_0)$ and $V(x)$ of $s_0$ and $x$, respectively, such that $U(s_0)\cdot V(x)\subset U(x)$. Observe that there exists $k\in \omega$ such that for each $n>k$ $(0,n)(0,n)^{-1}\in V(s_0)$. Since $x\in \overline{B^+}$ there exists an infinite subset $C\in\omega$ such that $(0,m)\in V(x)$ for each $m\in C$. Hence
$$(0,n)(m,n)^{-1}=(0,n)(0,n)^{-1}\cdot (0,m)\in U(s_0)\cdot V(x) \subset U(x),\hbox{  where } k<m<n \hbox{ and } m\in C.$$

Fix an arbitrary disjoint open neighborhoods $U(0)$ and $U(x)$ of $0$ and $x$, respectively. Since $x\cdot 0=0$ the continuity of the semigroup operation in $S$ yields open neighborhoods $V(0)\subset U(0)$ and $V(x)\subset U(x)$ of $0$ and $x$, respectively, such that $V(x)\cdot V(0)\subset U(0)$. By Lemma~\ref{lemmaT}, there exists $k\in \omega$ such that $L_n^*\subset V(0)$ for each $n>k$. Hence there exist integers $m<n\in\omega$ such that $(m,n)(m,n)^{-1}\in V(0)$ and $(0,n)(m,n)^{-1}\in V(x)$. Then $$(0,n)(m,n)^{-1}=(0,n)(m,n)^{-1}\cdot(m,n)(m,n)^{-1} \in V(x)\cdot V(0)\subset U(0)$$
which contradicts to the choice of the sets $U(0)$ and $U(x)$. The obtained contradiction implies that subcase $(2.2)$ does not hold.

Similarly, it could be shown that subcase $(2.3)$ does not hold as well. Hence case $(2)$ is impossible.
\end{proof}


\begin{center}
Proof of Theorem~\ref{th2}
\end{center}

Let $S$ be a CLP-compact topological semigroup which contains a graph inverse semigroup $G(E)$ over an acyclic graph $E$ as a dense subsemigroup. There are two cases to consider:
\begin{itemize}
\item[$(1)$] $E(S)=E(G(E))$;
\item[$(2)$] there exists an element $s\in E(S)\setminus E(G(E))$.
\end{itemize}
Consider case $(1)$. By Corollary~\ref{cor1}, the set $E(S)=\overline{E(G(E))}$ is countably compact at $E(G(E))$. Hence $E(S)$ is countably compact and the only non-isolated point in $E(S)$ could be $0$.
Lemma~\ref{lemma0} implies that the set $E(G(E))$ is compact.
By Lemma~\ref{lemmacomp}, the set $G(E)$ is closed in $S$. Hence $G(E)=S$. By Corollary~\ref{lemma1}, the set $G(E)$ is compact and $G(E)=(G(E),\tau_c)$.

Consider case $(2)$. Running ahead, we will show that this case is not possible.
Fix an arbitrary element $s\in E(S)\setminus E(G(E))$. By Lemma~\ref{c1}, there exists an open neighborhood $U$ of $s$ such that $U\cap G(E)\subset E(G(E))$. Since for each elements $a,b\in E(G(E))$, $a\cdot b \in \{a,b,0\}$ there exists a linearly ordered set $L\subset E(G(E))$ and an open neighborhood $V\subset U$ of $s$ such that $V\cap G(E)\subset L$ (in the other case each open neighborhood $W$ of $s$ will contain elements $a,b\in E(G(E))$ such that $a\cdot b=0$ and hence $0\in W^2$ which contradicts to the Hausdorffness of $S$). By Lemma~\ref{TT}, there exists an unary tree $T\subset E$ such that $L\subset G(T)\subset G(E)$. Observe that $s\in \overline{G(T)}$. Corollary~\ref{cor1} implies that $\overline{G(T)}$ is countably compact at $G(T)$. Hence $\overline{G(T)}$ is countably pracompact. Since each countably pracompact topological space is CLP-compact, Lemma~\ref{thT} implies that $G(T)$ is compact. Hence $\overline{G(T)}=G(T)$ which contradicts to the choice of $s$.

The following remark shows that Theorem~\ref{th2} does not hold for graphs which contain a cycle.

\begin{remark}\label{rem}
In~\cite[Theorem 6.1]{BanDimGut-2010} it was proved that there exists a Tychonoff countably pracompact topological semigroup $S$ which contains the bicyclic monoid as a dense subsemigroup. Moreover, under Martin's Axiom the semigroup $S$ is countably compact (see~\cite[Theorem 6.6 and Corollary 6.7]{BanDimGut-2010}).
Simply verifications show that the semigroup $S$ with adjoint isolated zero is countably pracompact and contains densely the discrete polycyclic monoid $\mathcal{P}_1$.
\end{remark}

Now we are able to prove our main result.

\section{Proof of Theorem~\ref{main}}\label{proof}

\begin{proof}
Assume that $G(E)$ is a dense subsemigroup of a CLP-compact topological semigroup $S$.
Put $$A=\{e\in E^0\mid \hbox { there exists a cycle } u \hbox{ such that } s(u)=r(u)=e\}.$$

Let $\kappa=|A|$. If $\kappa=0$ then the graph $E$ is acyclic and the proof follows from Theorem~\ref{th2} and Theorem~\ref{th1}.
Assume that $\kappa>0$. Fix an arbitrary vertex $e\in A$ and a cycle $u\in \op{Path}(E)$ such that $s(u)=r(u)=e$. We claim that for each vertex $f\neq e$ both sets
$$B_e=\{x\in E^1\mid s(x)=e \hbox{ and } r(x)=f\}\qquad \hbox{and} \qquad C_e=\{x\in E^1\mid s(x)=f \hbox{ and } r(x)=e\}$$
are empty. Indeed, suppose that there exists $x\in B$. Let $T$ be the inverse subsemigroup of $G(E)$ which is generated by the set $\{u^nx\mid n\in \N\}$. Observe that each non-zero element of $T$ is of the form $u^nx(u^mx)^{-1}$ and
\begin{equation*}
u^nx(u^mx)^{-1}\cdot u^kx(u^lx)^{-1}=
\left\{
  \begin{array}{cl}
    u^nx(u^lx)^{-1}, & \hbox{ if~ } m=k;\\
    0, & \hbox{ if~ } m\neq k.
  \end{array}
\right.
\end{equation*}
Simple verifications show that the semigroup $T$ is isomorphic to the semigroup of $\N{\times}\N$ matrix units (the isomorphism $h:T\rightarrow \mathcal{M}_{\N}$ can be defined as follows: $h(u^nx(u^mx)^{-1})=(n,m)$ and $h(0)=0$). Corollary~\ref{cor1} implies that $\overline{T}$ is countably compact at the dense subspace $T$. Hence $\overline{T}$ is countably pracompact. Since each countably pracompact topological space is feebly compact we obtain that $\overline{T}$ is feebly compact. This contradicts~\cite[Theorem~4.4]{BardGut-2016(1)} where it was proved that an infinite semigroup of matrix units cannot be embedded densely into a feebly compact topological semigroup. Hence the set $B_e$ is empty.

Assume that there exists $x\in C$. Let $T$ be the inverse subsemigroup of $G(E)$ which is generated by the set $\{xu^n\mid n\in \N\}$. Similar arguments imply that the semigroup $T$ is isomorphic to the semigroup of $\N{\times}\N$-matrix units and $\overline{T}$ is feebly compact. This contradicts~\cite[Theorem~4.4]{BardGut-2016(1)}. Hence the set $C_e$ is empty as well.

For each $x\in A$ put $E_{x}^0=\{x\}$, $E_x^1=\{y\in E^1\mid s(y)=r(y)=x\}$. Since the sets $B_x$ and $C_x$ are empty any cycle $u$ such that $s(u)=r(u)=x$ is a product of finitely many loops. Hence the set $E_x^1$ is non-empty. By $E_x$ we denote a subgraph of $E$ which contains one vertex $x$ and $E_x^1$ is the set of edges of $E_x$. Assume that there exists a vertex $x\in A$ and two distinct loops $y,z\in E_x^1$. Then, by~\cite[Theorem~3]{Bardyla-2018}, GIS $G(E)$ contains the polycyclic monoid $\mathcal{P}_2$ which is generated by the elements $y,z,y^{-1},z^{-1}$. By~\cite[Theorem~4.6]{BardGut-2016(1)}, monoid $\mathcal{P}_2$ cannot embed as a dense subset into a feebly compact topological semigroup. However, by Corollary~\ref{cor1}, the semigroup $\overline{\mathcal{P}_2}\subset S$ is feebly compact which implies the contradiction. Hence for each vertex $x\in A$ the set $E_x^1$ is singleton.

Put $F^0=\{f\in E^0\mid \hbox{ there exists no cycle } u \hbox{ such that } s(u)=r(u)=f\}$.
Denote $F^1=\{x\in E^1\mid s(x)\in F^0 \hbox{ and } r(x)\in F^0\}$.
Let $F=(F^0,F^1,s_F,r_F)$ be a subgraph of $E$ where $s_F$ ($r_F$, resp.) is the restriction of the source (range, resp.) function $s$ ($r$, resp.) of the graph $E$ on the set $F^1$. Observe that the graph $F$ is acyclic and $E=\sqcup_{\alpha\in \kappa}E_{\alpha}\sqcup F$.
If the graph $F$ is non-empty, then Corollary~\ref{cor1} implies that $\overline{G(F)}$ is a CLP-compact subsemigroup of $S$. Theorem~\ref{th2} implies that $G(F)$ is compact. By Theorem~\ref{th1}, the set $I_f$ is finite for each vertex $f\in F^0$.


This completes the proof of the statements $(1)$ and $(2)$.

Consider statement $(3)$. First we show that each open neighborhood $U$ of $0$ contains all but finitely many sets $E(G(E_{\alpha}))$, $\alpha\in k$.  Suppose to the contrary that there exist an open neighborhood $U$ of $0$ and an infinite subset $Z$ of $\kappa$ such that $E(G(E_{\alpha}))\setminus U\neq \emptyset$ for each $\alpha\in Z$. Fix an arbitrary element $x_{\alpha}\in E(G(E_{\alpha}))\setminus U$, $\alpha\in Z$. By Corollary~\ref{cor1}, the set $X=\{x_{\alpha}\mid \alpha\in Z\}$ has an accumulation point $y\in E(S)$. The continuity of the semigroup operation in $S$ yields an open neighborhood $U$ of $y$ such that $U\cdot U\subset S\setminus\{0\}$. Since $x_{\alpha}\cdot x_{\beta}=0$ if $\alpha\neq \beta$ we obtain that $0\in U\cdot U\subset S\setminus\{0\}$ which provides the contradiction.

Now we are ready to prove statement $(3)$. Suppose the contrary that there exists an open neighborhood $U$ of $0$ such that the set $Z=\{\alpha\in \kappa\mid G(E_{\alpha})\setminus U\}$ is infinite. For each $\alpha\in Z$ fix an arbitrary element $x_{\alpha}\in G(E_{\alpha})\setminus U$. Since the set $X=\{x_{\alpha}\mid \alpha\in Z\}$ is infinite and discrete, by Lemma~\ref{lemma0}, it has an accumulation point $s\in S$. By our assumption $s\neq 0$. Fix disjoint open neighborhoods $U(s)$ and $U(0)$ of points $s$ and $0$, respectively. Since $s\cdot 0=0$ the continuity of the semigroup operation in $S$ yields open neighborhoods $V(s)\subset U(s)$ and $V(0)\subset U(0)$ of points $s$ and $0$, respectively, such that $V(s)\cdot V(0)\subset U(0)$. Since $V(0)$ contains all but finitely many sets $E(G(E_{\alpha}))$, $\alpha\in\kappa$ and $s$ is an accumulation point of the infinite set $X$ we obtain that there exists element $x\in X\cap V(s)$ such that $x^{-1}x\in V(0)$. Hence $x=x\cdot x^{-1}x\in V(s)\cdot V(0)\subset U(0)$ which implies that $U(0)\cap U(s)\neq\emptyset$. The obtained contradiction finishes the proof of statement $(3)$.
\end{proof}

Since compact topological semigroups do not contain the bicyclic monoid, Theorem~\ref{main} implies the following:

\begin{corollary}
A GIS $G(E)$ embeds into a compact topological semigroup $S$ iff $G(E)$ is compact.
\end{corollary}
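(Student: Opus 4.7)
The backward direction is trivial: if $G(E)$ equals $(G(E),\tau_c)$ then it is already a compact topological semigroup (by the implication $(3)\Rightarrow(1)$ of Theorem~\ref{th1}, together with Corollary~\ref{lemma1}), so it embeds into itself.

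For the forward direction, suppose $G(E)$ is a subsemigroup of a compact topological semigroup $S$. The plan is to first reduce to the dense case by replacing $S$ with the closure $\overline{G(E)}^{S}$, which is a closed subsemigroup of $S$ and therefore still a compact topological semigroup, inside which $G(E)$ is dense. Since compactness implies CLP-compactness, Theorem~\ref{main} applies and supplies a decomposition $E=(\sqcup_{\alpha\in k}E_\alpha)\sqcup F$ where $F$ is acyclic and each $E_\alpha$ consists of one vertex $v_\alpha$ and one loop $\ell_\alpha$.

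The crux of the argument is to show $k=0$. If on the contrary some $E_\alpha$ existed, the two elements $\ell_\alpha, \ell_\alpha^{-1}\in G(E)$ would satisfy $\ell_\alpha^{-1}\ell_\alpha=r(\ell_\alpha)=v_\alpha$, and the subsemigroup of $G(E)$ they generate together with $v_\alpha$ would be a copy of the bicyclic monoid $\mathcal{C}(p,q)$ sitting inside $S$. However, by the classical result of Anderson--Hunter--Koch (cited in the preliminaries), no compact topological semigroup can contain a copy of the bicyclic monoid. This contradiction forces $k=0$, so $E=F$ is acyclic.

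It then remains to invoke statement $(2)$ of Theorem~\ref{main}, which yields that $G(F)=G(E)$ is a compact subset of $G(E)$; in particular $G(E)$ is compact and, by Corollary~\ref{lemma1}, it equals $(G(E),\tau_c)$. The proof is in this sense a short application of Theorem~\ref{main}; the only point requiring a little care is the reduction from ``embeds as a subsemigroup'' to ``embeds densely'', which is handled transparently by passing to the closure, and the verification that each $E_\alpha$-component really does produce a bicyclic subsemigroup of $G(E)$, which is immediate from the relation $\ell_\alpha^{-1}\ell_\alpha=v_\alpha$.
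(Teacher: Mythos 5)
Your proof is correct and follows exactly the paper's (one-line) argument: apply Theorem~\ref{main} to the closure of $G(E)$ in $S$, rule out the loop components $E_\alpha$ because a compact topological semigroup cannot contain the bicyclic monoid, and conclude from statement $(2)$ that $G(E)=G(F)$ is compact. The added care about passing to $\overline{G(E)}^S$ to meet the density hypothesis of Theorem~\ref{main} is exactly the right (implicit) step.
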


Unfortunately Theorem~\ref{main} says nothing about a topology on the subspaces $G(E_{\alpha})$ which are isomorphic to the bicyclic monoid with adjoint zero.
Remark~\ref{rem} implies that the bicyclic monoid with adjoint isolated zero can be embedded densely into a countably pracompact topological semigroup. However, the author does not know the answer to the following question:

\begin{question}
Can a bicyclic monoid with adjoint non-isolated zero be embedded into a countably compact topological semigroup?
\end{question}





\end{document}